\theoremstyle{definition} 
\newtheorem{example}{Example}
\newtheorem{definition}{Definition}
\theoremstyle{theorem} 
\newtheorem{proposition}{Proposition}
\newtheorem{corollary}{Corollary}
\newcommand{\red}[1]{{\color{red}#1}}
\def\be{\begin{equation}}
\def\ee{\end{equation}}
\def\bea{\begin{eqnarray}}
\def\eea{\end{eqnarray}}
\begin{document}

\begin{center}
\baselineskip 24 pt {\LARGE \bf  
Hamiltonian structure of compartmental epidemiological models}
\end{center}

\begin{center}

{\sc Angel Ballesteros$^1$, Alfonso Blasco$^1$, Ivan Gutierrez-Sagredo$^{1,2}$}

\medskip

{$^1$Departamento de F\'isica, Universidad de Burgos, 
09001 Burgos, Spain}

{$^2$Departamento de Matem\'aticas y Computaci\'on, Universidad de Burgos, 
09001 Burgos, Spain}

 \medskip
 
e-mail: {\href{mailto:angelb@ubu.es}{angelb@ubu.es}, \href{mailto:ablasco@ubu.es}{ablasco@ubu.es}, \href{mailto:igsagredo@ubu.es}{igsagredo@ubu.es}}

\end{center}

\medskip

\begin{abstract}

Any epidemiological compartmental model with constant population is shown to be a Hamiltonian dynamical system in which the total population plays the role of the Hamiltonian function. Moreover, some particular cases within this large class of models are shown to be bi-Hamiltonian. New interacting compartmental models among different populations, which are endowed with a Hamiltonian structure, are introduced. The Poisson structures underlying the Hamiltonian description of all these dynamical systems are explicitly presented, and their associated Casimir functions are shown to provide an efficient tool in order to find exact analytical solutions for epidemiological models, such as the ones describing the dynamics of the COVID-19 pandemic.

\end{abstract}

\bigskip

\noindent Keywords: epidemics, compartmental models, dynamical systems, Hamiltonian systems, Poisson structures, Casimir functions

\tableofcontents

\section{Introduction}

The recent COVID--19 pandemic has showed that a deeper understanding of the dynamics of the outbreaking and spreading of an infectious disease is necessary \cite{MB2020science,Fanelli2020,Barmparis2020,NANT2020wuhan,YW2020wuhan,KGL2020covid}. Specially urgent is the need for models that accurately predicts the first stages of an epidemic outbreak \cite{Chowell2016}, in order to guide a safe reopening of countries and thus avoiding new outbreaks that could be devastating \cite{Prem2020}. 

The origin of the mathematical modeling for epidemics can be traced back to the works of Daniel Bernouilli regarding the spreading of smallpox \cite{Bernoulli1760}. Since then the field of mathematical epidemiology has experienced a great development, mainly in the form of very concrete models describing epidemics with specific features \cite{Korobeinikov2005,Miller2012}. Among them, the so-called compartmental models \cite{Hethcote1973,Li1999,Hethcote2007,Boujakjian2016,Miller2017} are based on the classification of the individuals of a given total population in a number of compartments, each of them representing a different stage in the infectious process. In these models, population transitions among different compartments can be assumed to be either deterministic or stochastic. For deterministic models, the movement from one compartment to another is governed by an ordinary differential equation. So, mathematically, these compartmental models are just systems of (nonlinearly) coupled ordinary differential equations (ODEs). 

Among these models, a prominent role is played by the original SIR model proposed by Kermack and McKendrick \cite{KM1927sir}, which is expressed by a 3-dimensional dynamical system defined by
\begin{equation}
\label{eq:originalSIR}
\begin{split}
&\dot S(t) = - \beta\, S(t) \, I(t) , \\
&\dot I(t) = \beta \,S(t)\, I(t) - \alpha \, I(t), \\
&\dot R(t) = \alpha \, I(t) , \\
\end{split}
\end{equation}
which is subjected to the initial conditions $S(0)=S_0$, $I(0)=I_0$ and $R(0)=R_0$. In this very schematic model the population is divided into 3 compartments: 
\begin{itemize}

\item Susceptible individuals $S(t)$: not infected individuals at time $t$ who could potentially be infected.

\item Infected individuals $I(t)$: infected individuals at time $t$ which can spread the disease at time $t' > t$. 

\item Recovered individuals $R(t)$: this compartment really contains 2 subgroups:

\begin{itemize}

\item Individuals that have been infected at time $t' < t$ in the past and have developed immunization to the disease.

\item Individuals that have died at time $t' < t$.

\end{itemize}

\end{itemize}

The original SIR model assumes that both the transmission rate $\beta$ (the probability of any susceptible individual being infected when meeting an infected individual) and the recovery rate $\alpha$ are constants. Moreover, it is also assumed that the immunization is perpetual, so recovered individuals cannot be reinfected. Of course, since there is no transference from the recovered compartment to the other two ones, deaths are considered as 'recovered' individuals. Moreover, the SIR model also assumes that the total population is constant, since
\begin{equation}
\dot S + \dot I + \dot R = 0 ,
\end{equation}
and thus 
\begin{equation}
S+I+R = N .
\end{equation}
Note that there is no loss of generality in assuming that $N=1$ and therefore thinking of $S$, $I$ and $R$ as fractions of the total population. This will be the case during the rest of the paper, unless otherwise stated.

To be more precise, this model should be interpreted as describing an epidemic whose dynamics is fast enough in order to assume that the size of the population is constant (so no `vital dynamics' are considered) and such that the recovered individuals are immunized for a long enough time. In this way the change in the total population by causes different from the infection can be neglected. The dynamics of this simple model, which is given in Figure \ref{fig:originalSIR}, can be schematically represented by the diagram depicted in Figure \ref{diag:originalSIR}. Note that in this kind of diagrams, an arrow going out from a given compartment implies a negative sign for the corresponding term in the r.h.s. of the differential equation for the derivative of the population for such compartment. Since each arrow provides two terms with opposite signs, any model constructed through an arbitrary number of arrows between pairs of compartments will be such that the total population is constant (the sum over all compartments of all the terms at the r.h.s. of the system of ODEs vanishes). 

\begin{figure}[H]
\begin{minipage}{.5\textwidth}
\begin{center}
\begin{tikzpicture}[->]
\node (s) at (-2,0) {$S$};
\node (i) at (2,0) {$I$};
\node (r) at (0,-2) {$R$};
\draw[blue] (s) to [bend left=20] node[midway,above] {$\beta S I$} (i);
\draw[blue] (i) to [bend left] node[midway,below] {$\alpha I$} (r);
\end{tikzpicture}
\caption{Schematic diagram for the SIR system. \label{diag:originalSIR}}
\end{center}
\end{minipage}
\begin{minipage}{.5\textwidth}
\includegraphics[width=7.7cm, height=4.6cm]{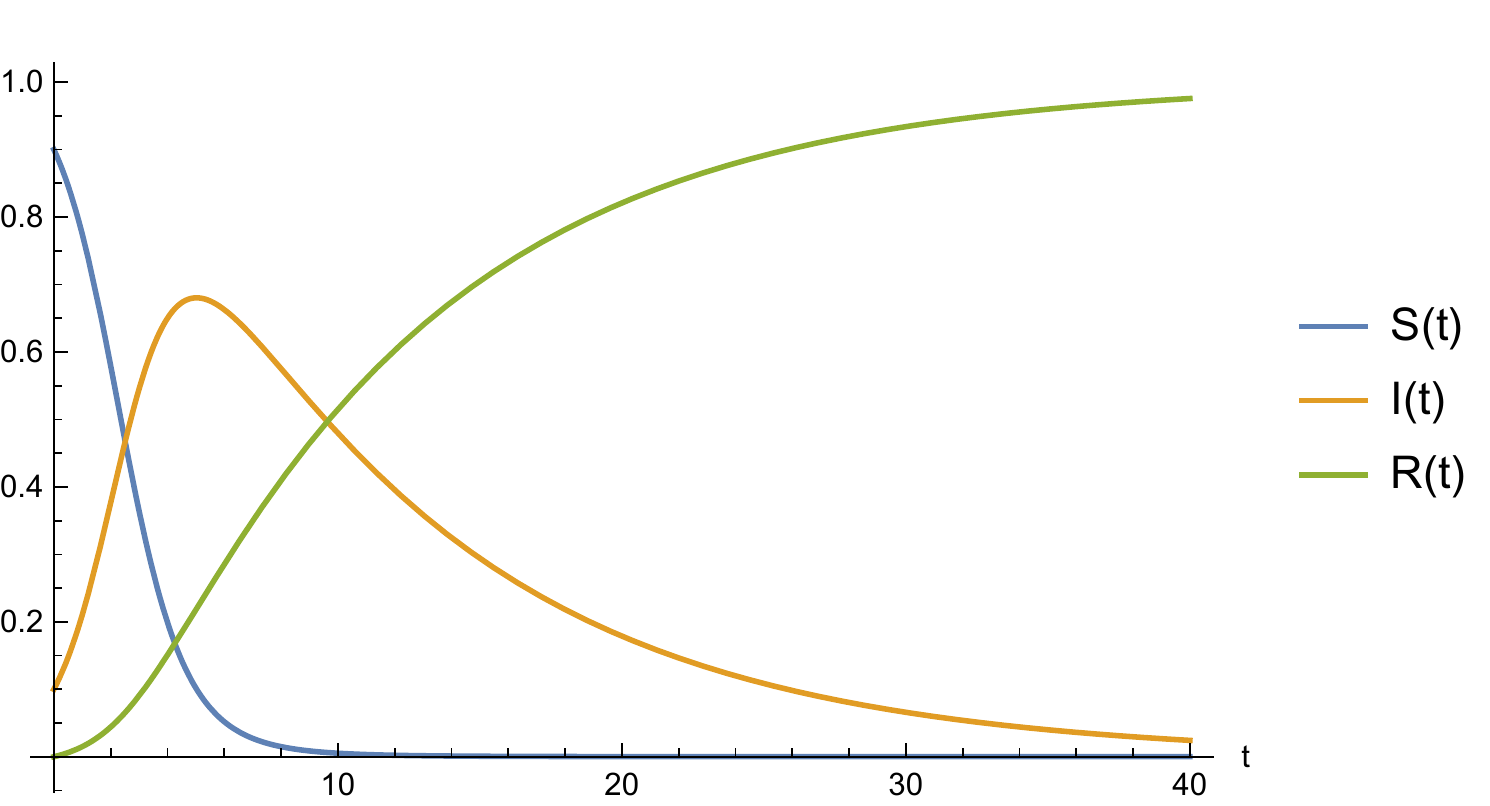} 
\caption{Typical dynamics of the SIR system for $\beta=1, \alpha =0.1$. The infection grows to reach its peak, and then the infection disappears. \label{fig:originalSIR}}
\end{minipage}
\end{figure}

Despite its great simplicity, the SIR model of Kermack and McKendric has proved to provide the essential qualitative features of several complex epidemics. In fact, it has been proved to give an accurate quantitative  description of the COVID--19 pandemic for some countries \cite{Postnikov2020sir}. A generalization of the original SIR model that is sensible for viral infections \cite{CG1978generalizationSIR,Brauer1990} (see also \cite{Brauer2005,BrauerCastillo2012book}) consists in substituting the constant transmission rate by a function that depends on the sum of susceptible and infected populations, namely
\begin{equation}
\label{eq:betaSI}
\beta(S+I)=\frac{\phi(S+I)}{S+I} ,
\end{equation}
with $\phi : \mathbb R_+ \to \mathbb R_+$ an strictly positive function such that
\begin{equation}
\phi'(x) \geq 0 ,\qquad \left( \frac{\phi(x)}{x} \right)' \leq 0 .
\end{equation}
In what follows we will consider this generalization, since all our results will also hold in this case.

A large number of compartmental models have been proposed, most of them being extensions of the original SIR system since more complex epidemics may require extra terms in the system of ODEs. For instance, endemic diseases characterized by a non-permanent immunization could be considered, in which an extra arrow connecting the $R$ and $S$ compartments has to appear in the diagram generalizing Figure \ref{diag:originalSIR}. Other diseases may present a passive immunity, like maternal passive immunity, in which newborns receive maternal antibodies. This feature is typically modeled by adding an extra compartment to the model. All these models have been shown to be successful in order to model complex epidemics, and they are usually solved  through standard numerical techniques for nonlinear systems of ODEs. 

It is also well-known \cite{Nutku1990sir} that the original SIR system \eqref{eq:originalSIR} admits a Hamiltonian description (for the sake of completeness, a summary of the basics on the theory of Hamiltonian dynamical systems is provided in the Appendix). In this approach, the Hamiltonian function is just the total population
\begin{equation}
\mathcal H_{1}=S+I+R \, ,
\end{equation}
and the the SIR dynamical system is obtained as the Hamilton's equations
\begin{equation}
\label{eq:Hamilton_eqs}
\dot S = \{S,\mathcal H_{1} \}_1 , \qquad
\dot I = \{I,\mathcal H_{1} \}_1 , \qquad
\dot R = \{R,\mathcal H_{1} \}_1 ,
\end{equation}
 with respect to the Poisson structure \begin{equation}
\label{eq:Poisson_original_SIR}
\{S,I\}_{1}=0, \quad \{S,R\}_{1}=-\beta S\,I, \quad \{I,R\}_{1}=-\alpha\,I+\beta\,S\,I .
\end{equation}
Moreover, Nutku also proved \cite{Nutku1990sir} that the SIR system is bi-Hamiltonian, since there does exist a second Poisson structure 
\begin{equation}
\{S,I\}_{2}=-\beta\,S\,I, \quad \{S,R\}_{2}=\beta S\,I, \quad \{I,R\}_{2}=-\beta\,S\,I ,
\end{equation}
which is compatible with the first one (in the sense that $\{\cdot,\cdot \} = \{\cdot,\cdot \}_1 + \{\cdot,\cdot \}_2$ is a well-defined Poisson structure, see the Appendix), together with a second Hamiltonian function which is not the total population, namely
\begin{equation}
\label{eq:H_2_originalSIR}
\mathcal H_{2}=-R-\dfrac{\alpha}{\beta}\log(S) \, ,
\end{equation}
but nevertheless the SIR dynamical system can be again written as the Hamilton's equations 
\begin{equation}
\dot S = \{S,\mathcal H_{2} \}_2 = -\beta S I, \qquad
\dot I =  \{I,\mathcal H_{2} \}_2 = \beta S I - \alpha I, \qquad
\dot R = \{R,\mathcal H_{2} \}_2 = \alpha I. 
\end{equation}
From the previous expressions it is easy to check that $\{ f,\mathcal H_{1} \}_2 =0$ for all $f(S,I,R)$, so $\mathcal H_{1}$ is a Casimir function for $\{\cdot,\cdot \}_2$. A Casimir function for $\{ \cdot,\cdot \}_1$ is given by 
\begin{equation}
\mathcal C_1 = S+I-1-\dfrac{\alpha}{\beta}\log(S) \, ,
\end{equation}
which coincides with \eqref{eq:H_2_originalSIR} in the hypersurface of constant population $\mathcal M = \mathcal H_1^{-1} (1)$, i.e. $\mathcal H_2 |_{\mathcal M} = \mathcal C_1 |_{\mathcal M}$. In this case we could have taken $\mathcal C_1$ as the second Hamiltonian function. However, we choose not to do so since this will no longer be true in some of the examples presented in this paper. The fact that Casimir functions provide additional integrals for the dynamics is one of the essential features of the Hamiltonian description of dynamical systems.

The aim of this paper is to show that a Hamiltonian structure in which the total population plays the role of the Hamiltonian function can be defined for a very large class of (deterministic) compartmental epidemiological models, including certain systems of interacting  populations which, to the best of our knowledge, have not been previously considered in the literature. In general, interacting models should be relevant in the context of the current COVID--19 pandemic, where some regions/countries can be described by the same dynamical models of the infection, although with different parameters, and the exchange of individuals among regions must be taken into account for the joint evolution of the pandemic. We stress that, in general, the Poisson structures defining the Hamiltonian structures here presented could be helpful in order to find exact analytical solutions for the corresponding epidemiological models provided that the associated Casimir functions are found. 

The structure of the paper is the following. In Section \ref{sec:generalizedSIR} we study the most general compartmental model with three compartments and constant population, which we call the generalized SIR model, and we show that this is indeed a Hamiltonian system. Some outstanding instances of this model  are considered and, moreover, we show that some compartmental models with non-constant population are equivalent to this system. In Section \ref{sec:Hamiltoniancompmodels} we prove our main result stating that every compartmental model with constant population is Hamiltonian. In Section \ref{sec:interactingSIR} we introduce a family of models consisting in an arbitrary number of generalized SIR systems whose compartments also interact by the exchange of individuals from one population to another, and the Hamiltonian structure for these interacting models with constant joint total population  is presented. In Section \ref{sec:interacting_compartmental} we show that an analogous result is valid, under a mild assumption, by taking arbitrary compartmental models as building blocks for the interacting system. In Section \ref{sec:biHamiltonian} we explicitly present the bi-Hamiltonian structure of several compartmental models, in particular of the SIR model with a vaccination function and the endemic SIRS system. Finally, in Section \ref{sec:exactsol} we find exact analytical solutions for some of the examples previously considered, which reduce to the exact analytical solution of the SIR model when the relevant parameters vanish. A Section including some comments and open questions closes the paper.

\section{Generalized SIR model}
\label{sec:generalizedSIR}

The basic SIR system \eqref{eq:originalSIR} of Kermack and McKendrick can be extended in order to include more general interactions among the susceptible, infected and recovered compartments. In fact, the most general 3-dimensional dynamical system containing the original SIR (with the generalized transmission rate $\beta(S+I)$ given by \eqref{eq:betaSI}) and maintaining the total population constant, which we call the \emph{generalized SIR system}, can be written as
\begin{equation}
\label{eq:generalizedSIR}
\begin{split}
&\dot S = - \beta S I - \varphi_{S \to I} (S,I,R) - \varphi_{S \to R} (S,I,R), \\
&\dot I = \beta S I - \alpha I + \varphi_{S \to I} (S,I,R) - \varphi_{I \to R} (S,I,R), \\
&\dot R = \alpha I + \varphi_{S \to R} (S,I,R) + \varphi_{I \to R} (S,I,R), \\
\end{split}
\end{equation}
where $\varphi_{A \to B} (S,I,R)$ represents the transfer function from the compartment $A$ to the compartment $B$, which in general are allowed to be arbitrary functions of $S$, $I$ and $R$. Obviously the original terms of the SIR system could be absorbed in $\varphi_{A \to B} (S,I,R)$, but for the sake of clarity we choose not to do so. This system is represented by the following schematic diagram:
\begin{center}
\begin{tikzpicture}[->]
\node (s) at (-3,0) {$S$};
\node (i) at (3,0) {$I$};
\node (r) at (0,-3) {$R$};
\draw[blue] (s) to [bend left=20] node[midway,above] {$\beta S I$} (i);
\draw (s) to [bend right=20] node[midway,above] {$\varphi_{S \to I} (S,I,R)$} (i);
\draw[blue] (i) to [bend left] node[midway,below] {$\alpha I$} (r);
\draw (s) to [bend right] node[midway,left] {$\varphi_{S \to R} (S,I,R)$} (r);
\draw (i) to [bend right=20] node[midway,below] {$\varphi_{I \to R} (S,I,R)$} (r);
\end{tikzpicture}
\end{center}
where the direction of an arrow simply shows the transference of individuals given by the positive part of the corresponding function. Of course, composition of arrows is allowed, and blue arrows are the corresponding ones to the original SIR model.

Indeed, such a generic system does not need to be biologically relevant, since not all choices for $\varphi_{A \to B} (S,I,R)$ describe sensible epidemiological models. However, different choices of these functions do lead to systems catching key features of different epidemiological situations. In any case, in the sequel we show that such a generalized SIR \eqref{eq:generalizedSIR} system admits a unified description as a Hamiltonian system.

\subsection{A unified Hamiltonian description for the generalized SIR model}

For concreteness, we consider the generalized SIR model \eqref{eq:generalizedSIR}, with $\beta=\beta(S+I)$ of the form \eqref{eq:betaSI} and $\alpha$ constant. However we should stress that the following result is valid for $\alpha(S,I)$ and $\beta(S,I)$ arbitrary functions of $S$ and $I$. We have the following 

\begin{proposition}
\label{prop:generalizedSIRhamiltonian}
The generalized SIR system given by \eqref{eq:generalizedSIR} is a Hamiltonian system, with respect to the Hamiltonian function 
\begin{equation}
\label{eq:H_SIR}
\mathcal H(S,I,R) = S + I + R
\end{equation}
and the Poisson algebra defined by the fundamental Poisson brackets given by
\begin{equation}
\begin{split}
\label{eq:poissonSI0}
&\{ I,R\} = -\alpha I + \beta S I + \varphi_2(S,I) , \\
&\{ S,R\} = - \beta S I + \varphi_1(S,I) , \\
&\{ S,I\} = 0, \\
\end{split}
\end{equation}
with
\begin{equation}
\label{eq:varphi12_SIR}
\begin{split}
& \varphi_1 = -\varphi_{S \to I} - \varphi_{S \to R} , \\
& \varphi_2 = \varphi_{S \to I} - \varphi_{I \to R} . \\
\end{split}
\end{equation}
\end{proposition}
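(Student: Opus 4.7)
The plan is to verify the two defining properties of a Hamiltonian system for the proposed ingredients: that Hamilton's equations $\dot{x}^{i} = \{x^{i},\mathcal H\}$ with respect to the fundamental brackets \eqref{eq:poissonSI0} reproduce the dynamical system \eqref{eq:generalizedSIR}, and that those brackets, extended bilinearly via the Leibniz rule, define a genuine Poisson algebra, i.e.\ that the Jacobi identity is satisfied.

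The dynamical check is a short direct computation in each coordinate. Using antisymmetry of $\{\cdot,\cdot\}$ together with $\{S,I\}=0$, one obtains
\begin{equation*}
\dot S = \{S,\mathcal H\} = \{S,R\} = -\beta S I + \varphi_1,\qquad
\dot I = \{I,\mathcal H\} = \{I,R\} = -\alpha I + \beta S I + \varphi_2,
\end{equation*}
while $\dot R = \{R,S\} + \{R,I\} = \alpha I - \varphi_1 - \varphi_2$. Substituting the definitions \eqref{eq:varphi12_SIR} of $\varphi_1,\varphi_2$ reproduces \eqref{eq:generalizedSIR} term by term.

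The substantive part of the proof is the Jacobi identity. Since antisymmetry and the Leibniz rule are automatic from the fundamental-bracket construction, in three dimensions only the single cyclic relation
\begin{equation*}
\{S,\{I,R\}\} + \{I,\{R,S\}\} + \{R,\{S,I\}\} = 0
\end{equation*}
needs to be verified. The third term vanishes because $\{S,I\}=0$. For the first two, the crucial observation is that both structure functions $\{I,R\}$ and $\{S,R\}$ depend on $(S,I)$ alone: indeed $\beta=\beta(S+I)$ (or more generally $\beta(S,I)$), $\alpha$ is constant (or more generally $\alpha(S,I)$), and by hypothesis $\varphi_1,\varphi_2$ are functions of $S,I$ only. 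Combined with $\{S,I\}=0$, a one-line Leibniz expansion of $\{S,\{I,R\}\}$ and $\{I,\{R,S\}\}$ makes every summand vanish identically.

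The only genuine obstacle in the argument is the Jacobi identity, but it collapses to a triviality thanks to the deliberate ``triangular'' choice $\{S,I\}=0$ combined with the $R$-independence of the remaining two structure functions. This mechanism is expected to serve as the backbone of the more general Hamiltonian constructions developed later in the paper.
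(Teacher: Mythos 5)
Your proposal follows essentially the same route as the paper: verify that Hamilton's equations for $\mathcal H = S+I+R$ reproduce \eqref{eq:generalizedSIR} under the identification \eqref{eq:varphi12_SIR}, and check the single Jacobi identity $\mathrm{Jac}(S,I,R)=0$. Both computations are correct, and you actually spell out the mechanism behind the Jacobi identity more explicitly than the paper does (the paper merely asserts it is ``easy to check''): the vanishing of $\{S,I\}$ together with the fact that both nonzero structure functions depend only on $(S,I)$ kills every term in the Leibniz expansion. This is exactly the structural observation the paper exploits again in Proposition \ref{prop:compartmentalHamiltonian}.

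There is, however, one point you pass over that the paper takes care to address. The proposition claims that the system \eqref{eq:generalizedSIR}, in which the transfer functions $\varphi_{A\to B}(S,I,R)$ are allowed to depend on $R$, is Hamiltonian with respect to brackets whose structure functions $\varphi_1(S,I)$, $\varphi_2(S,I)$ depend on $S,I$ only. You dispose of this by saying the $R$-independence holds ``by hypothesis,'' but that is not a hypothesis of the statement being proved; as written, the identification \eqref{eq:varphi12_SIR} does not even match a function of $(S,I,R)$ with a function of $(S,I)$. The paper closes this gap by noting that $\mathcal H$ is conserved, so the dynamics is confined to the hypersurface $\mathcal M=\mathcal H^{-1}(1)$, on which any $F(S,I,R)$ restricts to $F(S,I,1-S-I)$; hence arbitrary $R$-dependence in the transfer functions is recovered after this substitution, and your Jacobi argument then applies to the resulting functions of $(S,I)$. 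Adding that one observation would make your proof cover the full generality of the statement.
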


\begin{proof}

Hamilton's equations for \eqref{eq:H_SIR} and \eqref{eq:poissonSI0} read
\begin{equation}
\begin{split}
\label{eq:generalSIR}
&\dot S = \{S,\mathcal H \} = - \beta S I + \varphi_1(S,I) , \\
&\dot I = \{I,\mathcal H \} = \beta S I - \alpha I + \varphi_2(S,I), \\
&\dot R = \{R,\mathcal H \} = \alpha I - \varphi_1(S,I) - \varphi_2(S,I), \\
\end{split}
\end{equation}
which agrees with \eqref{eq:generalizedSIR} under the identification \eqref{eq:varphi12_SIR}. Moreover, it is easy to check that $\{\cdot,\cdot \}$ is a well-defined Poisson structure, since the Jacobi identity
\begin{equation}
\mathrm{Jac}(S,I,R) = \{\{S,I\},R\} + \{\{R,S\},I\} + \{\{I,R\},S\} = 0 \, ,
\end{equation}
holds. The total phase space for the system  is thus $\mathbb R^3_+$. However, the fact that the Hamiltonian function $\mathcal H : \mathbb R_+^3 \to \mathbb R_+$ is conserved along trajectories
\begin{equation}
\dot{ \mathcal H}=\dot{S}+\dot{I}+\dot{R} = 0 ,
\end{equation}
together with the fact that the Hamiltonian is just the total population implies that the dynamics of the system takes place on $\mathcal M = \mathcal H^{-1}(1)$ (recall that local coordinates $S$, $I$, $R$ are interpreted as fractions of the total population, which is conserved). Therefore, the restriction of any function $F : \mathbb R_+^3 \to \mathbb R_+$ to the hypersurface $\mathcal M$ is given by $F|_{\mathcal M} = F(S,I, 1-I-S)$, thus proving that the Hamiltonian system allows to include arbitrary functions of $R$.
\end{proof}

With this notation, which is the one that we will use throughout the paper, the explicit dependence of $R$ can be avoided, and the generalized SIR model is represented by the following diagram:
\begin{center}
\begin{tikzpicture}[->]
\node (s) at (-2,0) {$S$};
\node (i) at (2,0) {$I$};
\node (r) at (0,-2) {$R$};
\draw[blue] (s) to [bend left=20] node[midway,above] {$\beta S I$} (i);
\draw[blue] (i) to [bend left] node[midway,below] {$\alpha I$} (r);
\draw (r) to [bend left] node[midway,below] {$\varphi_1(S,I)$} (s);
\draw (r) to [bend left] node[midway,above] {$\varphi_2(S,I)$} (i);
\end{tikzpicture}
\end{center}
Note that the fact that $\varphi_1(S,I)$ and $\varphi_2(S,I)$ can be arbitrary functions of $S,I$, while satisfying $\mathrm{Jac}(S,I,R) =0$, can be traced back to the fact that $\{S,I\}$ vanishes \eqref{eq:poissonSI0}.

The following particular examples will play an important role in the rest of the paper.

\begin{example}[Original SIR model]
\label{ex:OriginalSIR}
The original SIR system \eqref{eq:originalSIR} is recovered by setting $\beta(S+I) = \beta$ and $\varphi_1(S,I) = \varphi_2(S,I) = 0$.
\hfill$\diamondsuit$
\end{example}

\begin{example}[Endemic SIRS model]
\label{ex:SIRSendemic}

An endemic disease is characterized by a non-permanent immunization, in such a way that the individuals that have been infected can be re-infected after a sufficiently large amount of time. A simple model capturing this feature is recovered from \eqref{eq:generalSIR} by setting  $\varphi_1(S,I) = \mu I$ and $\varphi_2(S,I) = -\mu I$. The ODE system becomes
\begin{equation}
\label{eq:SIRSendemic}
\begin{split}
&\dot S = - \beta S I + \mu\,I , \\
&\dot I = \beta S I - (\alpha+\mu) I, \\
&\dot R = \alpha I . \\
\end{split}
\end{equation}

The SIRS endemic model, whose typical dynamics is shown in Figure \ref{fig:endemicSIRS}, is represented by the diagram of Figure \ref{diag:endemicSIRS}.
\hfill$\diamondsuit$

\begin{figure}[H]
\begin{minipage}{.5\textwidth}
\begin{center}
\begin{tikzpicture}[->]
\node (s) at (-2,0) {$S$};
\node (i) at (2,0) {$I$};
\node (r) at (0,-2) {$R$};
\draw[blue] (s) to [bend left=20] node[midway,above] {$\beta S I$} (i);
\draw[blue] (i) to [bend left] node[midway,below] {$\alpha I$} (r);
\draw (r) to [bend left] node[midway,below] {$\mu I$} (s);
\draw[<-] (r) to [bend left] node[midway,above] {$\mu I$} (i);
\end{tikzpicture}
\caption{Endemic SIRS diagram. \label{diag:endemicSIRS}}
\end{center}
\end{minipage}
\begin{minipage}{.5\textwidth}
\includegraphics[width=7.7cm, height=4.6cm]{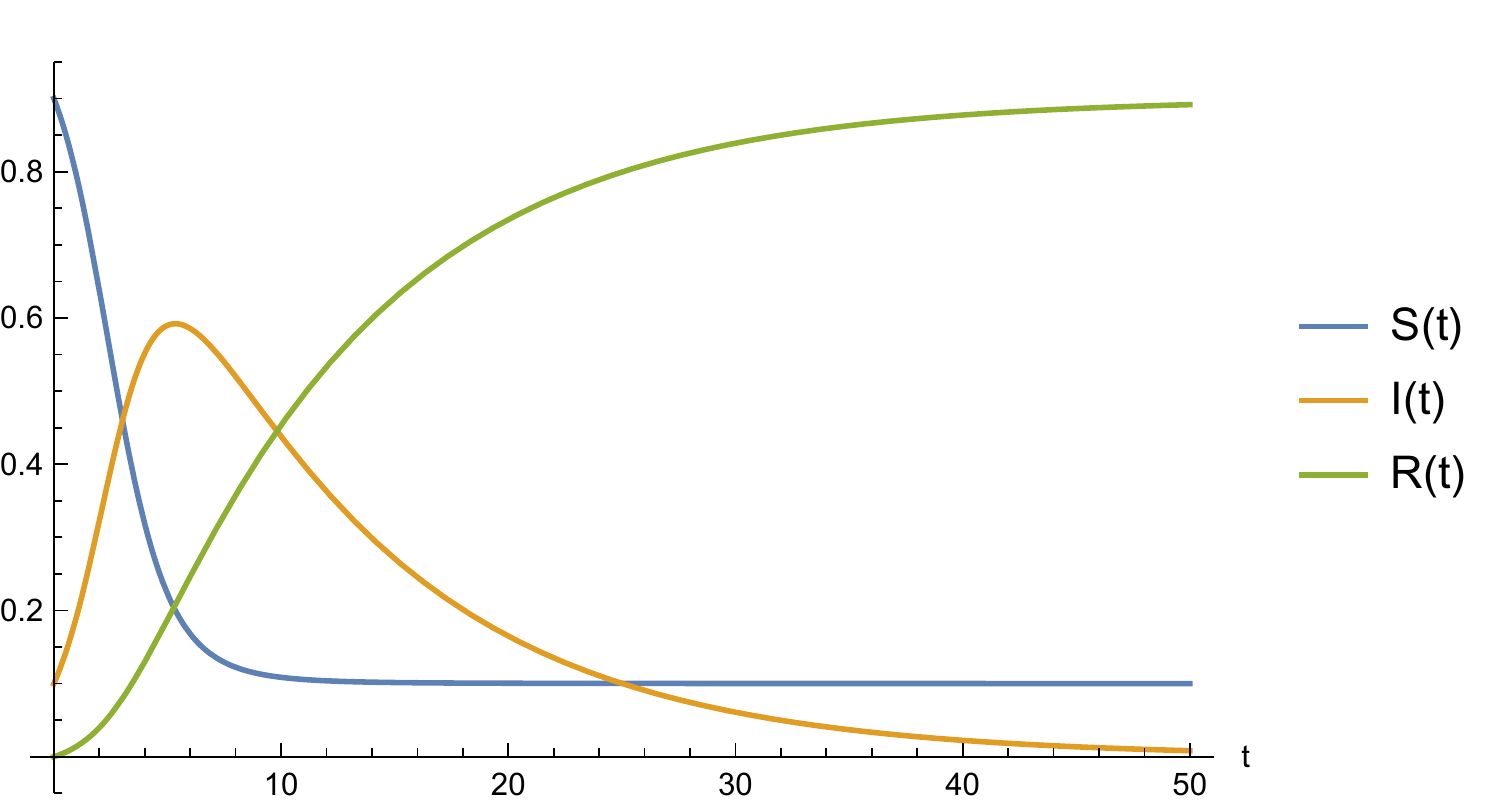} 
\caption{Typical dynamics of the endemic SIRS system ($\beta=1, \alpha =0.1, \mu = 0.1$). Similarly to the original SIR (Figure \ref{fig:originalSIR}) the infection disappears. However, due to the temporal immunization, the susceptible population does not vanish for $t$ large, thus making future outbreaks possible.  \label{fig:endemicSIRS}}
\end{minipage}
\end{figure}

\end{example}

\begin{example}[SIR model including a vaccination function]
\label{ex:SIRvaccination}

Systems that model vaccinated populations are specially interesting, since they allow to design optimal vaccination strategies (see for example \cite{Banerjee2020,Buonomo2020}). Setting $\varphi_1(S,I)=- v(S,I)$ and $\varphi_2(S,I)=0$ in the generalized SIR \eqref{eq:generalSIR} we have the system 
\begin{equation}
\begin{split}
\label{eq:SIRvaccination_general}
&\dot S = - \beta S I - v(S,I) , \\
&\dot I = \beta S I - \alpha I , \\
&\dot R = \alpha I + v(S,I) . \\
\end{split}
\end{equation}
If $v : \mathbb R_+^2 \to \mathbb R_+$ is a strictly positive function, this model describes a population that is receiving a vaccination during the epidemic outbreak. The diagram of a SIR model including vaccination, together with an example of its dynamics are given in Figures \ref{diag:vacc} and \ref{fig:vaccS}, respectively.

\begin{figure}[H]
\begin{minipage}{.5\textwidth}
\begin{center}
\begin{tikzpicture}[->]
\node (s) at (-2,0) {$S$};
\node (i) at (2,0) {$I$};
\node (r) at (0,-2) {$R$};
\draw[blue] (s) to [bend left=20] node[midway,above] {$\beta S I$} (i);
\draw[blue] (i) to [bend left] node[midway,below] {$\alpha I$} (r);
\draw[<-] (r) to [bend left] node[midway,left] {$v(S,I)$} (s);
\end{tikzpicture}
\caption{A simple vaccination model. \label{diag:vacc}}
\end{center}
\end{minipage}
\begin{minipage}{.5\textwidth}
\includegraphics[width=7.7cm, height=4.6cm]{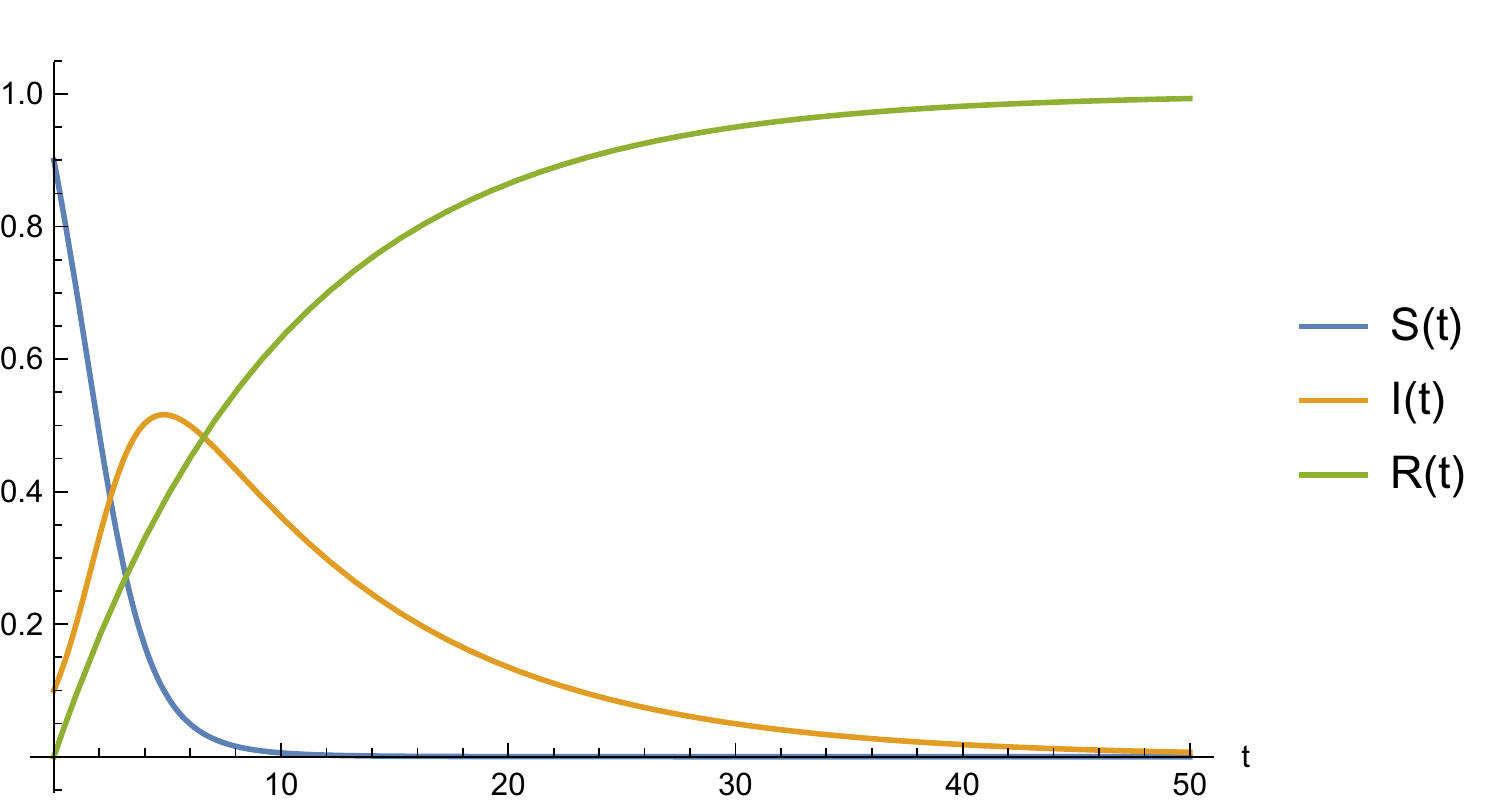} 
\caption{A simple modification of the original SIR with vaccination function $\propto S$ ($\beta=1, \alpha =0.1,v=0.1$). The dynamics are similar to Figure \ref{fig:originalSIR}, but with a smaller infection peak.  \label{fig:vaccS}}
\end{minipage}
\end{figure}

The following choices for the vaccination function will play a prominent role in the following: 
a) $v(S,I) = v I$ and b) $v(S,I) = v S$,
with $v \in \mathbb R_+$.
\hfill$\diamondsuit$
\end{example}

\begin{example}[SIR with vital dynamics]
\label{ex:SIRvitaldyn}
Models that include vital dynamics, i.e. models that take into account the deaths and newborns in the population during the epidemics, do also fit in the generalized SIR system \eqref{eq:generalizedSIR}, provided that the total population remains constant. The dynamical system for such a model is given by
\begin{equation}
\begin{split}
&\dot S = - \beta S I + \underbrace{\textcolor{olive}{(d_S S + d_I I + d_R R)}-\red{d_S S}}_{\varphi_1(S,I)} , \\
&\dot I = \beta S I - \alpha I - \underbrace{\red{d_I I}}_{\varphi_2(S,I)}, \\
&\dot R = \alpha I - \underbrace{\red{d_R R}}_{ \varphi_1(S,I) + \varphi_2(S,I)}, \\
\end{split}
\end{equation}
where $d_S,d_I,d_R$ are the death rates for each compartment, and the born rate is adjusted to keep the population constant. In fact, if $d_S=d_I=d_R=d$, then the newborns would be proportional to the total population $S+I+R$, which is usually the assumption used for these kind of models. This system is thus represented by the diagram
\begin{center}
\begin{tikzpicture}[->]
\node (s) at (-2,0) {$S$};
\node (i) at (2,0) {$I$};
\node (r) at (0,-2) {$R$};
\draw[blue] (s) to [bend left=20] node[midway,above] {$\beta S I$} (i);
\draw[blue] (i) to [bend left] node[midway,below] {$\alpha I$} (r);
\draw[red] (s) to node[midway,below]{$d_S S$} (-4,1);
\draw[red] (i) to node[midway,left]{$d_I I$} (4,1);
\draw[red] (r) to node[midway,right]{$d_R R$} (0,-4);
\draw[olive] (-4,-2) to node[midway,below]{$d_S S + d_I I + d_R R$} (s);
\end{tikzpicture}
\end{center}
In order to rewrite this model in the form  \eqref{eq:poissonSI0} we just have to set
\begin{equation}
\begin{split}
&\varphi_1(S,I) = d_I I + d_R (1-S-I), \\
&\varphi_2(S,I) = -d_I I, \\
\end{split}
\end{equation}
and we also have to restrict the dynamics to $\mathcal{H}=S+I+R=1$. The diagram of the system is given in Figure \ref{diag:vitaldyn} while a possible dynamics is presented in Figure \ref{fig:vitaldyn}.
\hfill$\diamondsuit$

\begin{figure}[H]
\begin{minipage}{.5\textwidth}
\begin{center}
\begin{tikzpicture}[->]
\node (s) at (-2,0) {$S$};
\node (i) at (2,0) {$I$};
\node (r) at (0,-2) {$R$};
\draw[blue] (s) to [bend left=20] node[midway,above] {$\beta S I$} (i);
\draw[blue] (i) to [bend left] node[midway,below] {$\alpha I$} (r);
\draw (r) to [bend left] node[midway,left] {$d_I I + d_R R$} (s);
\draw (r) to [bend left] node[midway,left] {$-d_I I$} (i);
\end{tikzpicture}\caption{A SIR system with vital dynamics. \label{diag:vitaldyn}}
\end{center}
\end{minipage}
\begin{minipage}{.5\textwidth}
\includegraphics[width=7.7cm, height=4.6cm]{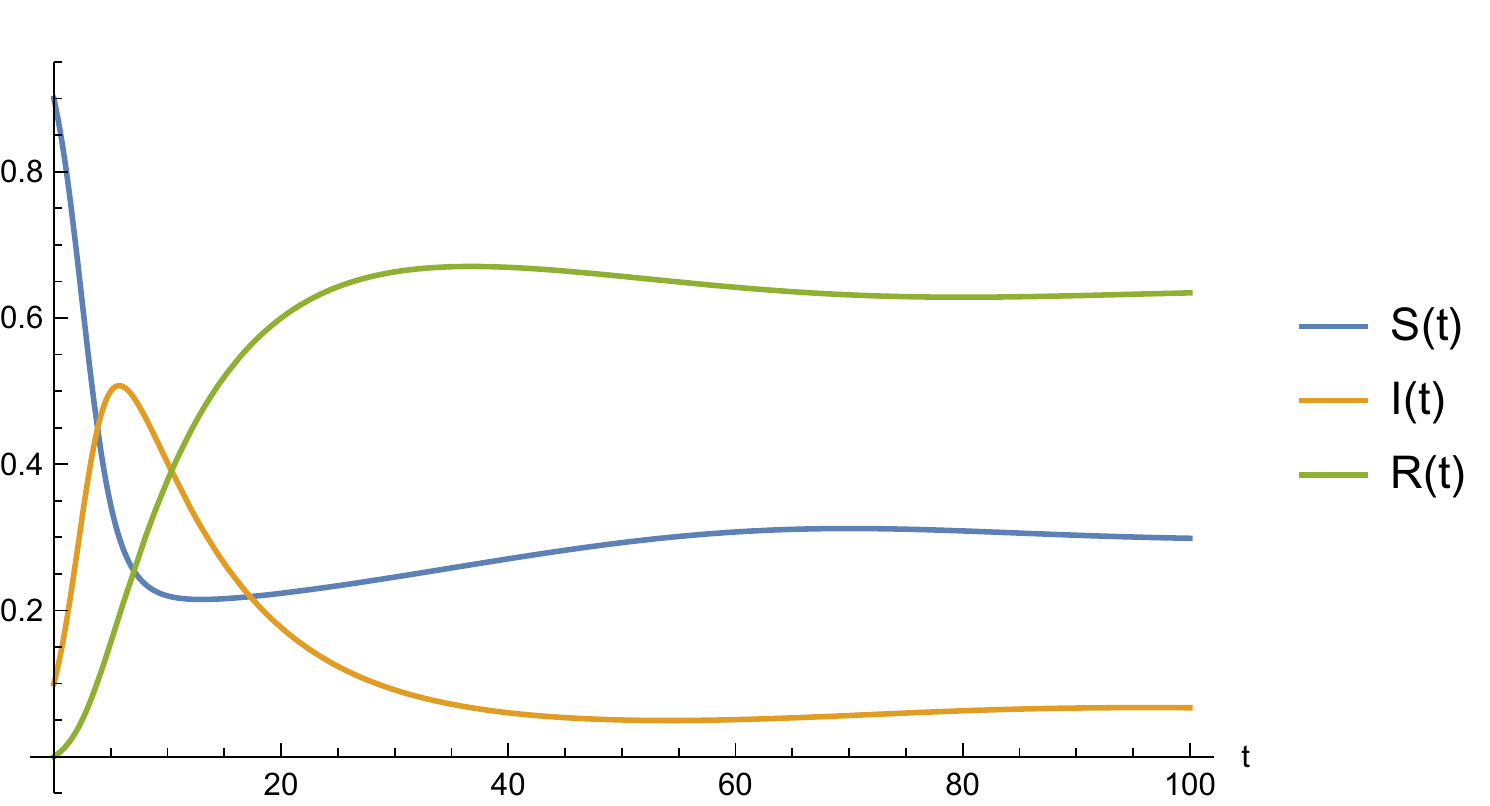} 
\caption{The qualitative behavior of the solutions for this system is similar to the endemic SIRS (see Figure \ref{fig:endemicSIRS}). The inclusion of new susceptible individuals makes the disease to become endemic, and the disease does not disappear in the long term. ($\beta=1, \alpha =0.1,d_S=d_R=0.01, d_I=0.2$). \label{fig:vitaldyn}}
\end{minipage}
\end{figure}
\end{example}

\subsection{A note regarding models with non-constant population}

All the models considered so far have a constant population. However, some models with a non-constant population can be also considered in this framework. Let us set $S(t)+I(t)+R(t)=N(t)$ the total population, which is a non-constant function of time. Thus we have
\begin{equation}
\frac{d}{dt} \left( \frac{S(t)+I(t)+R(t)}{N(t)} \right) = 0 ,
\end{equation}
so let us define 
\begin{equation}
s(t) = \frac{S(t)}{N(t)},\qquad i(t) = \frac{I(t)}{N(t)},\qquad r(t) = \frac{R(t)}{N(t)} .
\end{equation}
If we assume an exponential growth of the total population, we have that 
\begin{equation}
\dot N(t) = (b-d) N(t). 
\end{equation}
For instance this is the case for models including vital dynamics with equal death rates for each compartment. Under this assumption, the dynamics of a generalized SIR model with non-constant population  reads
\begin{equation}
\begin{split}
&\dot S = bN -dS - \beta \frac{S I}{N} + \varphi_1(S,I) , \\
&\dot I = \beta \frac{S I}{N} - \alpha I - d I + \varphi_2(S,I), \\
&\dot R = \alpha I -d R - \varphi_1(S,I) - \varphi_2(S,I). \\
\end{split}
\end{equation}
If $\varphi_1,\varphi_2$ are linear functions (most of the models do consider this assumption), we have that
\begin{equation}
\begin{split}
&\frac{ds}{dt} = \frac{\dot S N - S \dot N}{N^2} = - \beta s i + \underbrace{b-d s + \varphi_1(s,i) - (b-d)s}_{\tilde \varphi_1 (s,i)} , \\
&\frac{di}{dt} = \frac{\dot I N - I \dot N}{N^2} = \beta s i - \alpha i \underbrace{- d i + \varphi_2(s,i) - (b-d)i}_{\tilde \varphi_2 (s,i)} , \\
&\frac{dr}{dt} = \frac{\dot R N - R \dot N}{N^2} = \alpha i \underbrace{-d (1 - s - i) - \varphi_1(s,i) - \varphi_2(s,i) - (b-d)(1-s-i)}_{-\tilde \varphi_1 (s,i) - \tilde \varphi_2 (s,i)}, \\
\end{split}
\end{equation}
where $1-s-i=r$. Clearly the condition
\begin{equation}
\frac{d}{dt} (s+i+r) = 0 , \,
\end{equation}
is thus satisfied. In this way, any generalized SIR model with $\varphi_1,\varphi_2$ linear and with a non-constant population with exponential growth, turns out to be equivalent to a generalized SIR model \eqref{eq:generalSIR} with constant population. Therefore, all the results presented in this paper are also valid for this type of systems.


\section{Hamiltonian structure of compartmental models}
\label{sec:Hamiltoniancompmodels}

In general, being Hamiltonian is quite a restrictive property for a dynamical system. However, we have just proved that the generalized SIR model \eqref{eq:generalizedSIR} is Hamiltonian with no restrictions, and this suggests that a deeper understanding of this result should be possible. In fact, it turns out that this is a particular case of a much more general finding, which essentially states that \emph{any compartmental model with constant population is a Hamiltonian system}.

\begin{proposition}
\label{prop:compartmentalHamiltonian}
Let
\begin{equation}
\dot x^\mu = f^\mu(x^1,\ldots,x^{M-1}), \qquad\qquad \mu \in \{1,\ldots,M\}
\end{equation}
be a system of ODEs, such that 
\begin{equation}
\dot x^1 + \cdots + \dot x^M = 0 .
\end{equation}
This system is a Hamiltonian system, with
\begin{equation}
\mathcal H = \sum_{\mu=1}^M x^\mu,
\end{equation}
and fundamental Poisson brackets given by
\begin{equation}
\begin{split}
&\{x^\mu,x^\nu\}=0, \qquad\qquad \forall \mu,\nu \in \{1,\ldots,M-1\} \\
&\{x^\mu,x^M\}=f^\mu(x^1,\ldots,x^{M-1}), \qquad\qquad \mu \in \{1,\ldots,M-1\} .
\end{split}
\end{equation}

\end{proposition}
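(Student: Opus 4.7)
The plan is to mimic the structure of the proof of Proposition \ref{prop:generalizedSIRhamiltonian}, which handled the case $M=3$, and to verify the two things that are required for the claim: first, that Hamilton's equations generated by $\mathcal{H}=\sum_\mu x^\mu$ and the prescribed brackets reproduce the given ODE system; second, that the bracket extended to arbitrary smooth functions by Leibniz satisfies the Jacobi identity. The key structural fact to exploit is that the functions $f^\mu$ depend only on $x^1,\ldots,x^{M-1}$, combined with the constant-population condition $\sum_\mu f^\mu = 0$.

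First I would verify Hamilton's equations. For $\mu\in\{1,\ldots,M-1\}$, using linearity of the bracket in the second slot and the fact that $\{x^\mu,x^\nu\}=0$ for $\nu<M$, one gets
\begin{equation}
\dot x^\mu = \{x^\mu,\mathcal{H}\} = \sum_{\nu=1}^{M}\{x^\mu,x^\nu\} = \{x^\mu,x^M\} = f^\mu,
\end{equation}
as required. For the remaining equation I would use antisymmetry together with the constant-population hypothesis:
\begin{equation}
\dot x^M = \{x^M,\mathcal{H}\} = \sum_{\nu=1}^{M-1}\{x^M,x^\nu\} = -\sum_{\nu=1}^{M-1} f^\nu = f^M,
\end{equation}
where the last equality is precisely $\dot x^1+\cdots+\dot x^M=0$. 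So Hamilton's equations reproduce the full system with no further assumptions.

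The second step is the Jacobi identity $\mathrm{Jac}(x^\alpha,x^\beta,x^\gamma)=0$ on any triple of coordinates, from which the full Jacobi identity follows by Leibniz. I would handle this by a short case analysis according to how many of $\alpha,\beta,\gamma$ equal $M$. If none equal $M$, every inner bracket vanishes and the sum is trivially zero. If two or three of them equal $M$, antisymmetry again kills every term. The only nontrivial case is exactly one index equal to $M$, say $\gamma=M$ with $\alpha,\beta<M$. Then $\{\{x^\alpha,x^\beta\},x^M\}=\{0,x^M\}=0$, while
\begin{equation}
\{\{x^\beta,x^M\},x^\alpha\} = \{f^\beta(x^1,\ldots,x^{M-1}),x^\alpha\} = \sum_{\nu=1}^{M-1}\frac{\partial f^\beta}{\partial x^\nu}\{x^\nu,x^\alpha\} = 0,
\end{equation}
and similarly $\{\{x^M,x^\alpha\},x^\beta\}=0$. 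Here the crucial points are that $f^\beta$ is independent of $x^M$ (so no $\{x^\nu,x^M\}$ terms appear when expanding by Leibniz) and that $\{x^\nu,x^\alpha\}=0$ for $\nu,\alpha<M$. These two facts are precisely what makes Jacobi automatic.

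The only conceptual subtlety, rather than a technical obstacle, is the interpretation of the phase space. As in Proposition \ref{prop:generalizedSIRhamiltonian}, since $\mathcal{H}$ is conserved, the dynamics is confined to a level set $\mathcal{M}=\mathcal{H}^{-1}(N)$, on which $x^M=N-\sum_{\mu<M} x^\mu$, so writing the $f^\mu$ as functions of $x^1,\ldots,x^{M-1}$ is without loss of generality and matches the hypothesis. I would close the argument with a brief remark confirming that the bracket so defined extends uniquely to $C^\infty(\mathbb{R}^M)$ by Leibniz and that the verification above is enough to guarantee that the extension is a genuine Poisson structure.
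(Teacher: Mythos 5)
Your proposal is correct and follows essentially the same route as the paper: verify Hamilton's equations using that $\{x^\mu,\mathcal H\}$ collapses to $\{x^\mu,x^M\}$, and establish Jacobi from the fact that $x^1,\ldots,x^{M-1}$ span an Abelian subalgebra while the $f^\mu$ are independent of $x^M$. Your version merely spells out the $\dot x^M$ equation and the case analysis for Jacobi a bit more explicitly than the paper does, but the ideas are identical.
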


\begin{proof}

Clearly, there are only $M-1$ independent equations, since 
\begin{equation}
\dot x^M = -\dot x^1 - \cdots - \dot x^{M-1}.
\end{equation}
Hamilton's equations read
\begin{equation}
\dot x^\mu = \{x^\mu,\mathcal H\}= \{x^\mu,x^M\} = f^\mu(x^1,\ldots,x^{M-1}), \qquad\qquad \forall \mu \in \{1,\ldots,M-1\}
\end{equation}
so the dynamical system is recovered from the Hamiltonian structure, and all Jacobi identities are satisfied since $\{\{\cdot,\cdot\},\cdot\} = 0$, because the subalgebra defined by $x_1,\ldots,x_{M-1}$ is Abelian:
\begin{equation}
\mathrm{Jac}(x^\mu,x^\nu,x^M) = \{\cancel{\{x^\mu,x^\nu\}},x^M\} + \{\underbrace{\{x^M,x^\mu\}}_{-f^\mu(x^1,\ldots,x^{M-1})},x^\nu\}  + \{\underbrace{\{x^\nu,x^M\}}_{f^\nu(x^1,\ldots,x^{M-1})},x^\mu\} = 0 .
\end{equation}
\end{proof}

From this result, and taking into account that the Hamiltonian is  a conserved quantity, we straightforwardly deduce the following

\begin{corollary}
Every compartmental model with constant population is a Hamiltonian system.
\end{corollary}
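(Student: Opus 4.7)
The plan is to reduce the corollary almost immediately to Proposition \ref{prop:compartmentalHamiltonian}, with only a short translation step between the language of compartmental epidemiology and the language of the proposition. First I would fix notation: a compartmental model with $M$ compartments is defined by state variables $x^1,\ldots,x^M$ (each $x^\mu$ being the fraction of the total population in compartment $\mu$) and by a system of ODEs $\dot x^\mu = F^\mu(x^1,\ldots,x^M)$ whose right-hand sides are built additively from arrows between pairs of compartments, each arrow contributing a term $+\varphi_{A\to B}$ to $\dot x^B$ and $-\varphi_{A\to B}$ to $\dot x^A$. From this structural fact it is immediate that $\sum_{\mu=1}^M \dot x^\mu = 0$, so the total population $\sum_\mu x^\mu$ is conserved.

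Next I would use the constant-population constraint to eliminate the explicit dependence on $x^M$. After normalising the total population to $1$, the hypersurface $\mathcal M = \{x^1+\cdots+x^M=1\}$ is invariant under the flow, and any function $F^\mu(x^1,\ldots,x^M)$ can be restricted to $\mathcal M$ and thereby rewritten as a function $f^\mu(x^1,\ldots,x^{M-1})$ of the first $M-1$ compartments alone via $x^M = 1 - x^1 - \cdots - x^{M-1}$. The system therefore takes the form
\begin{equation}
\dot x^\mu = f^\mu(x^1,\ldots,x^{M-1}), \qquad \mu\in\{1,\ldots,M\},
\end{equation}
with $\dot x^1+\cdots+\dot x^M=0$, which is precisely the hypothesis of Proposition \ref{prop:compartmentalHamiltonian}.

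The conclusion of that proposition then gives the Hamiltonian description directly: the Hamiltonian is $\mathcal H=\sum_{\mu=1}^M x^\mu$, and the fundamental Poisson brackets are $\{x^\mu,x^\nu\}=0$ for $\mu,\nu<M$ together with $\{x^\mu,x^M\}=f^\mu(x^1,\ldots,x^{M-1})$, with Jacobi already verified in the proof of the proposition thanks to the Abelian subalgebra generated by $x^1,\ldots,x^{M-1}$.

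I do not expect any real obstacle here; the only point that deserves a sentence of care is the translation from the raw compartmental ODE system (whose right-hand sides a priori depend on all $M$ variables) to the reduced form $f^\mu(x^1,\ldots,x^{M-1})$ used in the proposition. This is legitimate precisely because constant population makes $\mathcal M$ invariant and the dynamics on $\mathcal M$ only depends on the first $M-1$ coordinates after substitution of $x^M = 1-\sum_{\mu<M} x^\mu$ — exactly the mechanism already used in the proof of Proposition \ref{prop:generalizedSIRhamiltonian} to justify including arbitrary functions of $R$ in the three-dimensional case. With that observation in place, the corollary is an immediate specialisation of Proposition \ref{prop:compartmentalHamiltonian}.
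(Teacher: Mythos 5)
Your proposal is correct and follows exactly the paper's route: the corollary is deduced by specialising Proposition \ref{prop:compartmentalHamiltonian}, with the constant-population constraint used to eliminate $x^M$ via $x^M = 1-\sum_{\mu<M}x^\mu$ on the invariant hypersurface, just as the paper does for $R$ in the proof of Proposition \ref{prop:generalizedSIRhamiltonian}. Your explicit spelling-out of the translation step is a welcome elaboration of what the paper leaves as ``straightforwardly deduced,'' but it is the same argument.
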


Moreover, we have that Proposition \ref{prop:generalizedSIRhamiltonian} is just a particular case of the previous Proposition \ref{prop:compartmentalHamiltonian}, with $M=3$, $x^1=S$, $x^2=I$ and $x^3=R$. Moreover, Proposition \ref{prop:compartmentalHamiltonian} shows that the generalized SIR  system \eqref{eq:generalizedSIR} can be naturally written as a Hamiltonian system for two extra Poisson structures, in both cases by keeping the same Hamiltonian. The first Poisson structure corresponds to take $x^3=I$ and reads
\begin{equation}
\label{eq:PoissonSIR_SR}
\begin{split}
&\{ S,I\} = - \beta S (1-S-R) + \varphi_1(S,R), \\
&\{ S,R\} = 0 , \\
&\{ I,R\} = -\alpha (1-S-R) + \varphi_1(S,R) + \varphi_2(S,R) , \\
\end{split}
\end{equation}
while the second one corresponds to $x^3=S$ and is given by
\begin{equation}
\label{eq:PoissonSIR_IR}
\begin{split}
&\{ S,I\} = \alpha I - \beta (1-I-R) I - \varphi_2(I,R) , \\
&\{ S,R\} = -\alpha I + \varphi_1(I,R) + \varphi_2(I,R) , \\
&\{ I,R\} = 0 . \\
\end{split}
\end{equation}
The existence of different Poisson structures will be relevant in the following Section, since each of them will allow us to define different interacting SIR models.

\subsection{Generalized SEIR model}

As an application of the result presented above, namely that every compartmental epidemiological model with constant population is a Hamiltonian system, we present the Hamiltonian structure for the SEIR system \cite{Li1999}. This model differs from SIR in the existence of an extra compartment, $E$, representing the individuals that have been exposed to the infection but are not infective yet. Therefore, this model accounts for infections that present an incubation period. As we did for the SIR model, we can define a family of \emph{generalized SEIR systems}, defined by the following system of ODEs
\begin{equation}
\begin{split}
\label{eq:generalSEIR}
&\dot S = - \beta S I + \varphi_1(S,E,I) , \\
&\dot E = \beta S I - \epsilon E + \varphi_2(S,E,I) , \\
&\dot I = - \alpha I + \epsilon E + \varphi_3(S,E,I), \\
&\dot R = \alpha I - \varphi_1(S,I) - \varphi_2(S,I) - \varphi_3(S,E,I), \\
\end{split}
\end{equation}
where $\varphi_i(S,E,I)$ are arbitrary functions of $S,E,I$. This system is represented by the diagram
\begin{center}
\begin{tikzpicture}[->]
\node (s) at (-3,0) {$S$};
\node (e) at (3,0) {$E$};
\node (i) at (3,-3) {$I$};
\node (r) at (-3,-3) {$R$};
\draw[blue] (s) to [bend left=20] node[midway,above] {$\beta S I$} (e);
\draw[blue] (e) to [bend left] node[midway,right] {$\epsilon E$} (i);
\draw[blue] (i) to [bend left] node[midway,below] {$\alpha I$} (r);
\draw (r) to [bend left] node[midway,left] {$\varphi_1(S,E,I)$} (s);
\draw (r) to [bend left] node[midway,above] {$\varphi_2(S,E,I)$} (e);
\draw (r) to [bend left] node[midway,above] {$\varphi_3(S,E,I)$} (i);
\end{tikzpicture}
\end{center}
Of course the original SEIR is recovered when $\varphi_i(S,E,I)=0$. The analogue of Examples \ref{ex:SIRSendemic}-\ref{ex:SIRvitaldyn} can be defined in a similar way. 

The fundamental Poisson brackets that endow the generalized SEIR model with a Hamiltonian structure are
\begin{equation}
\begin{split}
\label{eq:poissonSEIR}
&\{ S,R\} = - \beta S I + \varphi_1(S,E,I) , \\
&\{ I,R\} = -\alpha I + \epsilon E + \varphi_3(S,E,I) , \\
&\{ E,R\} = \beta S I - \epsilon E + \varphi_2(S,E,I). \\
&\{ S,I\} = 0, \qquad\qquad \{ E,S\} = 0, \qquad\qquad \{ E,I\} = 0, \\
\end{split}
\end{equation}
which is a particular case of Proposition \ref{prop:compartmentalHamiltonian} with $M=4$ and $x^M=R$. Again, three more different Poisson structures giving rise to the same dynamical system are possible by setting $x^M=S$, $x^M=E$ or $x^M=I$. In all the cases the Hamiltonian is just the total population
\begin{equation}
\mathcal H = S + E + I + R ,
\end{equation}
and it is straightforward to show that Hamilton's equation are just \eqref{eq:generalSEIR}. Similarly to the SIR case, given that the dynamics take place on $\mathcal H^{-1}(1)$, arbitrary functions of $R$ are indeed included in \eqref{eq:generalSEIR}, and therefore this is the most general 4-dimensional compartmental model with constant population containing the original SEIR system.

\section{Models for $N$ interacting SIR populations}
\label{sec:interactingSIR}

The aim of this Section is to show that we can define an epidemiological Hamiltonian system providing the dynamics of  $N$ populations $\mathcal P_1,\ldots,\mathcal P_N$, each of them presenting the `internal' dynamics of a generalized SIR model \eqref{eq:generalizedSIR}, and such that the N populations interact through an arbitrary transference from  one compartment of $\mathcal P_a$ to the {\em same} compartment of $\mathcal P_b$. We stress that the model will allow for the transference function to be an arbitrary function $\tau_{a b}(S_a,I_a,S_b,I_b)$ of the variables of the populations involved. For the sake of brevity, we only present the results for the case in which the interacting compartments are $R_a$ (recovered individuals) which is the model constructed from the Hamiltonian description \eqref{prop:generalizedSIRhamiltonian}. However, the brackets \eqref{eq:PoissonSIR_SR} and \eqref{eq:PoissonSIR_IR} allow to construct Hamiltonian interacting systems with interacting compartments $I_a$ and $S_a$, respectively.

\begin{proposition}
\label{pr:interacting_generalized_SIR}
Let us consider the Hamiltonian system defined by the 3N-dimensional Poisson algebra with fundamental Poisson brackets 
\begin{equation}
\begin{split}
\label{eq:poissonSI0prop}
&\{ I_a,R_a\} = -\alpha_a I_a + \beta_a S_a I_a + \varphi_{a,2}(S_a,I_a) , \\
&\{ S_a,R_a\} = - \beta_a S_a I_a + \varphi_{a,1}(S_a,I_a) , \\
&\{ S_a,I_a\} = 0, \\
&\{ R_a,R_b \} = -\tau_{a b}(S_a,I_a,S_b,I_b) , \\
\end{split}
\end{equation}
for $a,b \in\{1,\ldots, N\}$ and $b \neq a$, while the remaining Poisson brackets vanish. The Hamiltonian function
\begin{equation}
\label{eq:H_SIRprop}
\mathcal H = \sum_{k=1}^N \mathcal H_k(S_k,I_k,R_k) = \sum_{k=1}^N (S_k + I_k + R_k) \, ,
\end{equation}
defines a dynamical system with $N$ interacting populations $\mathcal P_1, \ldots, \mathcal P_N$, each of them evolving according to the dynamics of a generalized SIR model \eqref{eq:generalSIR}. The generalized SIR dynamics for each population can be arbitrary and, in particular, different for each population. Each pair of populations $\mathcal P_a$ and $\mathcal P_b$ interact by means of the transference of recovered individuals from $R_a$ to $R_b$ given by $\tau_{a b}(S_a,I_a,S_b,I_b)$. 
\end{proposition}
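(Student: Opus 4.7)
My plan is to proceed in two stages: first derive Hamilton's equations from the given Poisson brackets and verify that they reproduce the claimed dynamics of $N$ interacting generalized SIR populations; second, check that \eqref{eq:poissonSI0prop} satisfies the Jacobi identity and thus defines a valid Poisson structure.

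For the first stage, for each population $\mathcal P_a$ I would compute $\dot S_a = \{S_a, \mathcal H\}$, $\dot I_a = \{I_a, \mathcal H\}$, and $\dot R_a = \{R_a, \mathcal H\}$ by expanding \eqref{eq:H_SIRprop}. Since all brackets between variables of different populations vanish except for $\{R_a, R_b\}$, the equations for $\dot S_a$ and $\dot I_a$ reduce exactly to the generalized SIR dynamics of Proposition \ref{prop:generalizedSIRhamiltonian} applied to population $\mathcal P_a$, independently of the other populations. The equation for $\dot R_a$ inherits the generalized SIR contribution $\alpha_a I_a - \varphi_{a,1} - \varphi_{a,2}$ and in addition picks up the cross-population terms $\sum_{b \neq a} \{R_a, R_b\} = -\sum_{b \neq a} \tau_{ab}$, which encode the transference of recovered individuals from $\mathcal P_a$ to the other populations. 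Antisymmetry of the Poisson bracket forces $\tau_{ab}(S_a, I_a, S_b, I_b) = -\tau_{ba}(S_b, I_b, S_a, I_a)$, which is the natural consistency condition on the transfer function, and also guarantees that $\dot{\mathcal H} = -\sum_{a \neq b} \tau_{ab} = 0$, so the joint total population is conserved.

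For the second stage, I would verify $\mathrm{Jac}(x, y, z) = 0$ by classifying triples according to how many populations their arguments belong to. When all three arguments lie in a single population, Jacobi holds by Proposition \ref{prop:generalizedSIRhamiltonian}, since the brackets among $S_a, I_a, R_a$ coincide with the generalized SIR algebra. When the arguments span two or three populations, the only possibly non-vanishing brackets entering the computation are of the form $\{R_a, R_b\} = -\tau_{ab}(S_a, I_a, S_b, I_b)$. The key structural observation is that $\tau_{ab}$ depends only on $S$ and $I$ variables, and these have vanishing Poisson brackets with every $R_c$ for $c \neq a, b$, with each other across populations, and with $S_a, I_a$ within the same population. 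Hence whenever a term of the form $\{\tau_{ab}, z\}$ arises with $z$ being any $S$ or $I$ variable, or any $R_c$ with $c \notin \{a,b\}$, it vanishes; the remaining potentially nontrivial cases (e.g.\ triples involving two $R$-coordinates together with one of $S_a, I_a, S_b, I_b$) collapse for the same reason, and the single-population cases reduce to the already-checked generalized SIR Jacobi.

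The main obstacle is purely combinatorial bookkeeping: the algebra has $3N$ generators, so one must make sure no triple type has been overlooked. This can be handled systematically by exploiting the two structural facts above, namely that the generalized SIR sub-algebra of each population is self-contained and that the cross-population coupling $\tau_{ab}$ is a function of mutually Poisson-commuting $S, I$ coordinates from only two populations. No genuinely new analytic content is required beyond Proposition \ref{prop:generalizedSIRhamiltonian}; everything new follows from the observation that $\tau_{ab}$ Poisson-commutes with the $S$ and $I$ variables of every population.
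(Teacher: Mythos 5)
Your proposal is correct and follows essentially the same route as the paper: Hamilton's equations are computed population by population (only $\dot R_a$ picks up the cross terms $-\sum_{b\neq a}\tau_{ab}$), and the Jacobi identity is reduced to the facts that every fundamental bracket takes values in $\mathcal C^\infty(S_1,I_1,\ldots,S_N,I_N)$ and that this is an Abelian Poisson subalgebra. Your explicit classification of triples is simply a fleshed-out version of the paper's one-line justification of that second point, and your remarks on the skew-symmetry of $\tau_{ab}$ and the conservation of $\mathcal H$ match the paper's closing observation.
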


\begin{proof}
Hamilton's equations are computed from the Hamiltonian function and the Poisson brackets:
\begin{equation}
\label{eq:generalizedSIR_Npop}
\begin{split}
\dot S_a &= \{S_a,\mathcal H\} = \sum_{k=1}^N \left(\cancel{\{S_a,S_k\} }+ \cancel{\{S_a,I_k\}} + \{S_a,R_k\} \right) = \{S_a,R_a\} = - \beta_a S_a I_a + \varphi_{a,1}(S_a,I_a) ,\\
\dot I_a &= \{I_a,\mathcal H\} = \sum_{k=1}^N \left(\cancel{\{I_a,S_k\} }+ \cancel{\{I_a,I_k\}} + \{I_a,R_k\} \right) = \{I_a,R_a\} =\beta_a S_a I_a - \alpha_a I_a + \varphi_{a,2}(S_a,I_a), \\
\dot R_a &= \{R_a,\mathcal H\} = \sum_{k=1}^N \left(\{R_a,S_k\} + \{R_a,I_k\} + \{R_a,R_k\} \right) = \{R_a,S_a\} + \{R_a,I_a\} + \sum_{k=1}^N \{R_a,R_k\} \\
&=\alpha_a I_a - \varphi_{a,1}(S_a,I_a) - \varphi_{a,2}(S_a,I_a) - \sum_{k\neq a}^N \tau_{a k}(S_a,I_a,S_k,I_k). \\
\end{split}
\end{equation}
Therefore, the dynamics of each population $\mathcal P_a$ is that of a generalized SIR model, with arbitrary $\alpha_a$, $\beta_a$, $ \varphi_{a,1}(S_a,I_a)$ and $\varphi_{a,2}(S_a,I_a)$ for each of them. Moreover, the population $\mathcal P_a$ interact with the population $\mathcal P_b$ by means of the transfer function $\tau_{a b}(S_a,I_a,S_b,I_b)$ between $R_a$ and $R_b$.

The proof that \eqref{eq:poissonSI0prop} indeed defines a Poisson algebra is a direct consequence of the fact that $\{f_1,f_2\} \subset \mathcal C^\infty(S_1,I_1,\ldots, S_N,I_N)$ for any smooth functions $f_1,f_2$, and $\mathcal C^\infty(S_1,I_1,\ldots, S_N,I_N)$ is a Poisson commutative subalgebra. 
\end{proof}

Note that the total population for this system is indeed kept constant under the chosen interactions, since
\begin{equation}
\dot S + \dot I + \dot R = \sum_{a=1}^N \left( - \sum_{k\neq a}^N \tau_{a k}(S_a,I_a,S_k,I_k) \right) = 0,
\end{equation}
where we have used the skew-symmetry of the set of functions $\tau_{a b}(S_a,I_a,S_b,I_b)$ under the exchange $a \leftrightarrow b$. For example, taking $N=3$ in the previous Proposition we obtain the system represented by the following diagram
\begin{center}
\begin{tikzpicture}[->]
\node (s2) at (-5,4) {$S_2$};
\node (i2) at (-6.8,0.5) {$I_2$};
\node (r2) at (-3,1) {$R_2$};
\draw[blue] (s2) to [bend right=20] node[midway] {$\beta_2 S_2 I_2$} (i2);
\draw[blue] (i2) to [bend right] node[midway,below] {$\alpha_2 I_2$} (r2);
\draw (r2) to [bend right] node[midway,right] {$\varphi_{2,1}(S_2,I_2)$} (s2);
\draw (r2) to [bend right] node[midway,below] {$\varphi_{2,2}(S_2,I_2)$} (i2);

\node (s3) at (5,4) {$S_3$};
\node (i3) at (6.8,0.5) {$I_3$};
\node (r3) at (3,1) {$R_3$};
\draw[blue] (s3) to [bend left=20] node[midway] {$\beta_3 S_3 I_3$} (i3);
\draw[blue] (i3) to [bend left] node[midway,below] {$\alpha_3 I_3$} (r3);
\draw (r3) to [bend left] node[midway,left] {$\varphi_{3,1}(S_3,I_3)$} (s3);
\draw (r3) to [bend left] node[midway,below] {$\varphi_{3,2}(S_3,I_3)$} (i3);

\node (s1) at (-2,-5) {$S_1$};
\node (i1) at (2,-5) {$I_1$};
\node (r1) at (0,-2) {$R_1$};
\draw[blue] (s1) to [bend right=20] node[midway,below] {$\beta_1 S_1 I_1$} (i1);
\draw[blue] (i1) to [bend right] node[midway,right] {$\alpha_1 I_1$} (r1);
\draw (r1) to [bend right] node[midway,left] {$\varphi_{1,1}(S_1,I_1)$} (s1);
\draw (r1) to [bend right] node[midway,above] {$\varphi_{1,2}(S_1,I_1)$} (i1);

\draw[purple] (r1) to [bend right=30] node[midway,right] {$\tau_{1 3}(S_1,I_1,S_3,I_3)$} (r3);
\draw[purple] (r3) to [bend right=30] node[midway,above] {$\tau_{3 2}(S_3,I_3,S_2,I_2)$} (r2);
\draw[purple] (r2) to [bend right=30] node[midway,left] {$\tau_{2 1}(S_2,I_2,S_1,I_1)$} (r1);

\end{tikzpicture}
\end{center}

An example of the dynamics for this kind of interacting systems is presented in Figure \ref{fig:3pop}. 

\begin{figure}[t]
\begin{minipage}{.5\textwidth}
\begin{center}
\includegraphics[width=7.7cm, height=4.6cm]{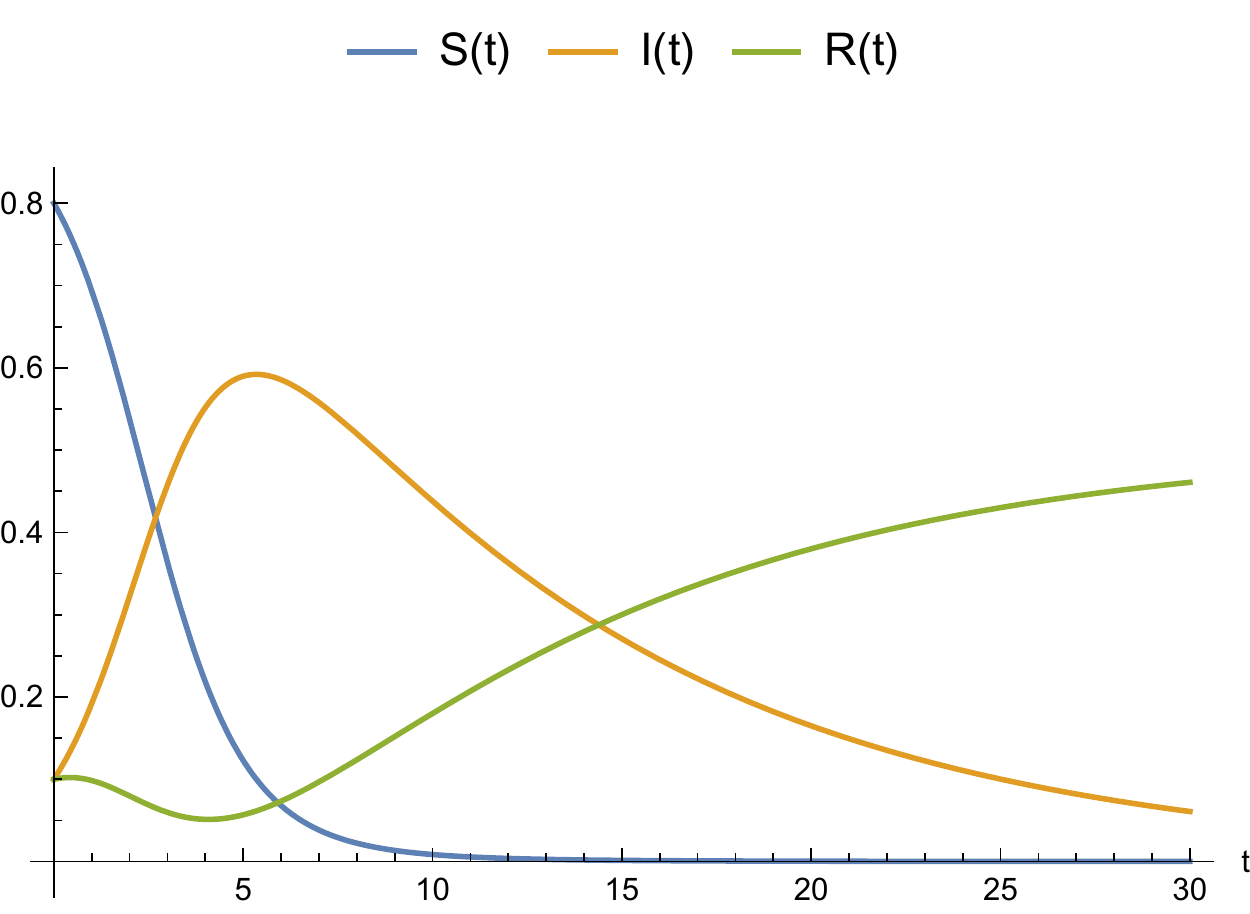} \\
\includegraphics[width=7.7cm, height=4.6cm]{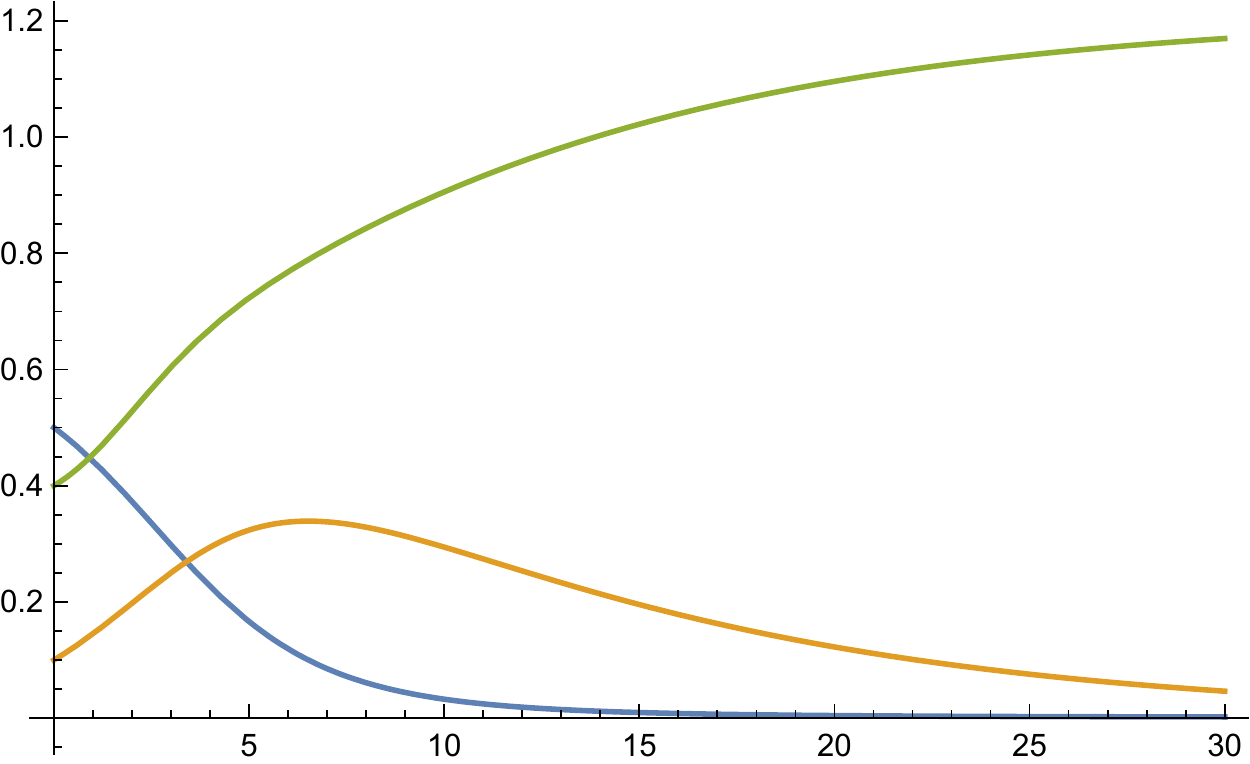}
\end{center}
\end{minipage}
\begin{minipage}{.5\textwidth}
\includegraphics[width=7.7cm, height=4.6cm]{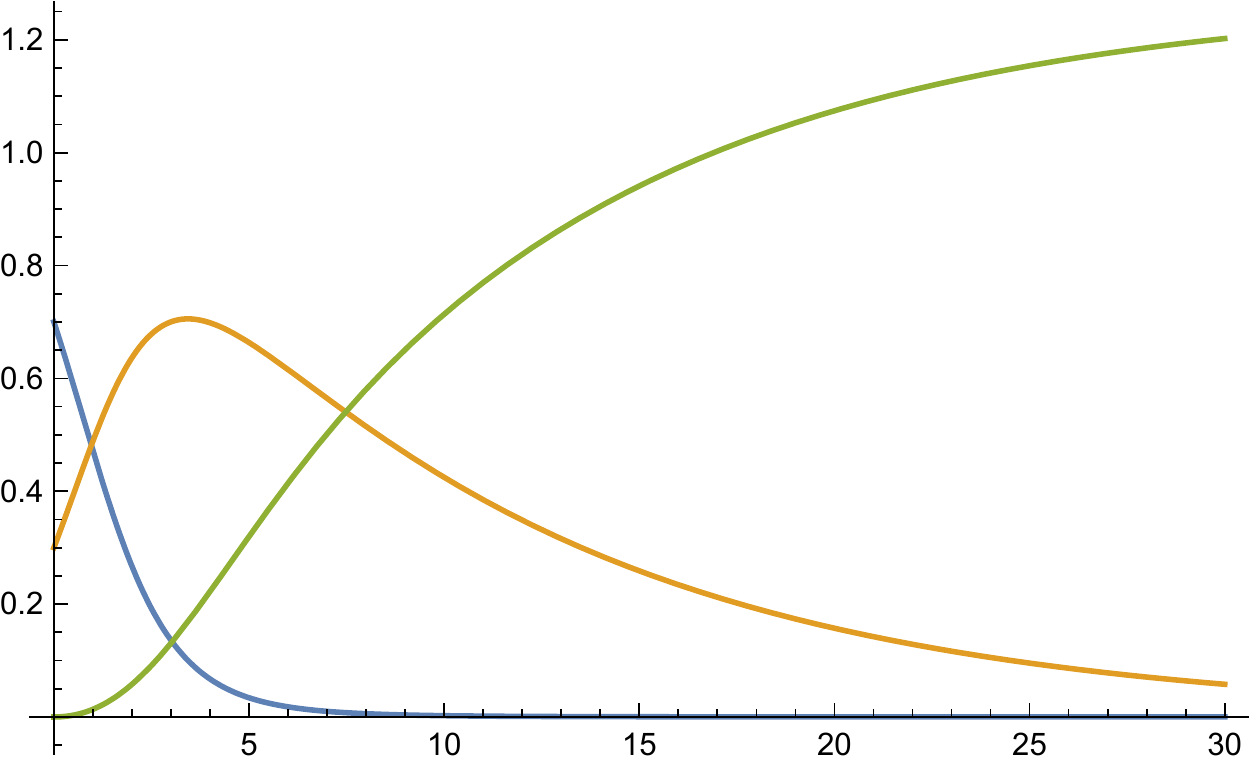} \\
\includegraphics[width=7.7cm, height=4.6cm]{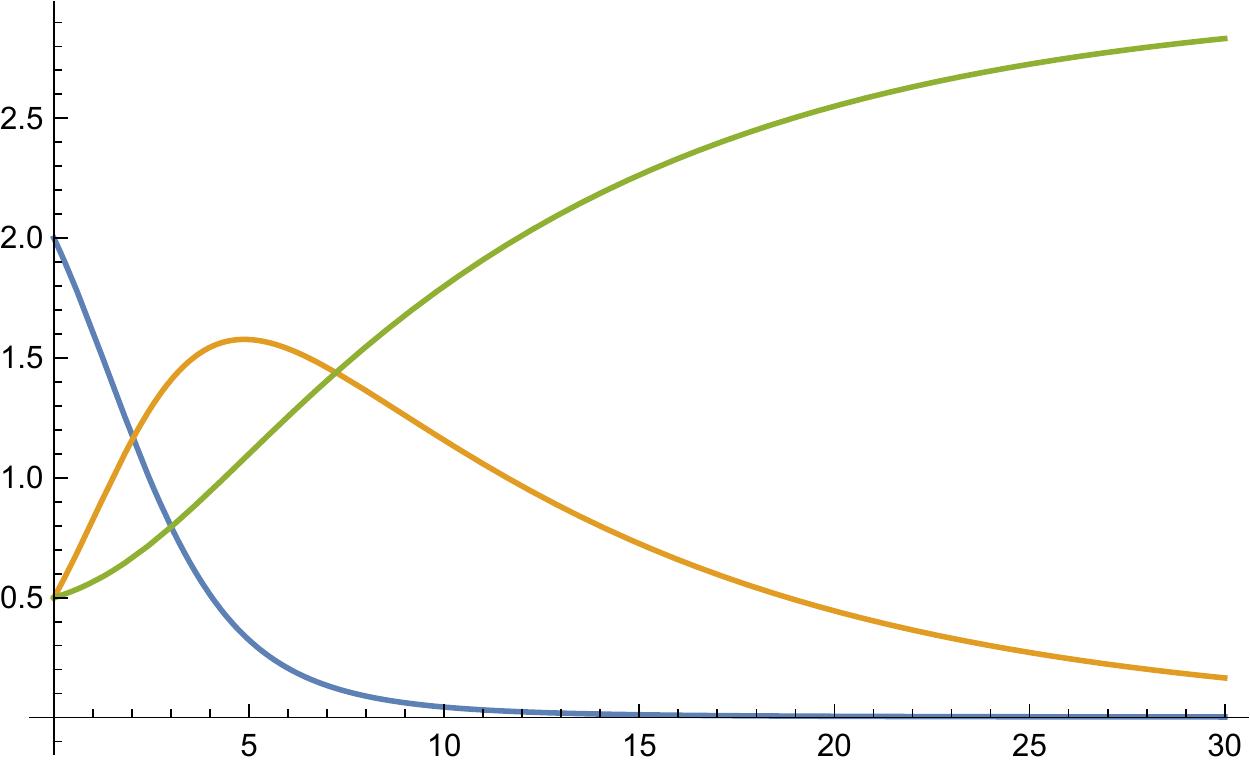} 
\end{minipage}
\caption{Dynamics of an interacting system with 3 populations, with interaction function $\tau_{ab} = \kappa (S_a +S_b+I_a-I_b)$. 
Top-left: First population. Top-right: Second population. Bottom-left: Third population. Bottom-right: Total population. ($\beta_a = 1, \alpha_a =0.1, \kappa=0.1, S_1(0)=0.8, I_1(0)=0.1, R_1(0)=0.1, S_2(0)=0.7, I_2(0)=0.3, R_2(0)=0, S_3(0)=0.5, I_3(0)=0.1, R_3(0)=0.4$).
\label{fig:3pop}}
\end{figure}

\section{$N$ interacting compartmental models}
\label{sec:interacting_compartmental}

The previous Section presents the case of $N$ interacting generalized SIR systems. We recall that in Section \ref{sec:Hamiltoniancompmodels} we have proved that any compartmental model with constant population can be also written as a Hamiltonian system in a similar manner. Therefore, it seems natural to wonder whether we can define interacting Hamiltonian systems based on more general epidemiological models. This is indeed the case, if the dynamical system defining the dynamics of each population depends only on $M-1$ out of the $M$ compartments, in the form described in the following

\begin{proposition}

Consider a system of $M \times N$ ODEs given by 
\begin{equation}
\label{eq:M_N_ODEsystem}
\begin{split}
&\dot x^\mu_a = f^\mu_a(x^1_a,\ldots,x^{M-1}_a), \qquad\qquad \forall \mu \in \{1,\ldots,M-1\}, \quad a \in \{1,\ldots,N\} \\
&\dot x^M_a = - \sum_{\mu = 1}^{M-1}  f^\mu_a(x^1_a,\ldots,x^{M-1}_a) - \sum_{b=1}^N \tau_{ab}(x^1_a,\ldots,x^{M-1}_a,x^1_b,\ldots,x^{M-1}_b) , \qquad \forall a \in \{1,\ldots,N\} , \quad b \neq a. \\
\end{split}
\end{equation}
This system is Hamiltonian with respect to the Poisson structure
\begin{equation}
\begin{split}
&\{x^\mu_a,x^\nu_a\}=0 , \\
&\{x^\mu_a,x^M_a\}=f^\mu_a(x^1_a,\ldots,x^{M-1}_a) , \\
&\{x_a^M,x^M_b\}= -\tau_{ab} (x^1_a,\ldots,x^{M-1}_a,x^1_b,\ldots,x^{M-1}_b) , \\
\end{split}
\end{equation}
and the Hamiltonian function
\begin{equation}
\mathcal H = \sum_{a=1}^N \sum_{\mu=1}^M x^\mu_a .
\end{equation}

\end{proposition}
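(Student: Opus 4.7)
The plan is to follow the strategy of Proposition~\ref{prop:compartmentalHamiltonian} combined with that of Proposition~\ref{pr:interacting_generalized_SIR}, since the statement is essentially a common generalization of both. Two checks are needed: Hamilton's equations with the proposed structure must reproduce \eqref{eq:M_N_ODEsystem}, and the prescribed brackets (supplemented by the vanishing of all unlisted ones, exactly as in the interacting SIR case) must satisfy the Jacobi identity; bilinearity and antisymmetry are automatic from the explicit listing.

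First I would compute Hamilton's equations. For $\mu < M$, the bracket $\{x^\mu_a,\mathcal H\}$ collects only the contribution $\{x^\mu_a, x^M_a\} = f^\mu_a$, since $x^\mu_a$ commutes with every other generator, giving the first line of \eqref{eq:M_N_ODEsystem}. For $x^M_a$, the bracket $\{x^M_a,\mathcal H\}$ picks up $\{x^M_a, x^\mu_a\} = -f^\mu_a$ for each $\mu < M$ together with $\{x^M_a, x^M_b\} = -\tau_{ab}$ for each $b \neq a$, reproducing the second line exactly.

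The heart of the proof is the Jacobi identity. The governing observation, used throughout, is that the subalgebra generated by $\{x^\mu_a : \mu < M,\ a = 1,\ldots, N\}$ is Abelian, so $\{F, x^\nu_b\} = 0$ for every smooth function $F$ of these variables and every $\nu < M$. Both $f^\mu_a$ and $\tau_{ab}$ are of this form. I would then split the verification into cases according to the number of entries carrying the upper index $M$. With zero or one such entry the Jacobi triple is either identically zero or reduces, after one inner bracket, to a function of the commuting variables paired with a $x^\nu_b$ ($\nu<M$), which is annihilated by the outer bracket. With two $M$-indices, $\mathrm{Jac}(x^\mu_a, x^M_b, x^M_c)$ survives only as $\pm\{f^\mu_a, x^M_{(\cdot)}\}$ terms, which collapse via $\{f^\mu_a, x^M_c\} = \delta_{ac}\sum_{\rho<M}(\partial_{x^\rho_a} f^\mu_a)\, f^\rho_a$ and then cancel by antisymmetry in $b\leftrightarrow c$. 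With three $M$-indices, $\mathrm{Jac}(x^M_a, x^M_b, x^M_c)$ for $a,b,c$ distinct, one needs $\{\tau_{bc}, x^M_a\} = 0$ and its cyclic counterparts; this follows at once because $\tau_{bc}$ depends only on the variables of populations $b$ and $c$, none of which bracket nontrivially with $x^M_a$.

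The main potential obstacle is this three-$M$-index case, because it is the only place where distinct pairwise interactions $\tau_{ab}$ can in principle couple to each other through the bracket. The argument rests on the structural assumption that each $\tau_{ab}$ depends only on the two populations it couples, so that no residual three-population term can appear; allowing $\tau_{ab}$ to depend on a third population's variables would force an additional compatibility condition, but under the hypothesis as stated Jacobi holds automatically.
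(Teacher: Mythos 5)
Your proposal is correct and follows the same route as the paper: verify Hamilton's equations by direct computation, then reduce the Jacobi identity to the observation that all brackets of generators land in the Abelian subalgebra generated by $\{x^\mu_a:\mu<M\}$ and that $\tau_{ab}$ involves only the variables of populations $a$ and $b$. Your case-by-case count of $M$-indices (including the cancellation $\delta_{ab}\delta_{ac}-\delta_{ac}\delta_{ab}=0$ in the two-index case and the three-index check $\{\tau_{bc},x^M_a\}=0$) simply spells out in detail what the paper compresses into the single remark that $\{\{g,g'\},g''\}=0$ for the relevant triples.
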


\begin{proof}

Hamilton's equations read
\begin{equation}
\begin{split}
\dot x^\mu_a&=\{x^\mu_a,\mathcal H \} = \sum_{b=1}^N \sum_{\nu=1}^M \{x^\mu_a, x^\nu_b\} = \sum_{b=1}^N \sum_{\nu=1}^M \delta_{ab} \delta^{\nu M} f^\mu_a(x^1_a,\ldots,x^{M-1}_a) = f^\mu_a(x^1_a,\ldots,x^{M-1}_a), \qquad \mu \neq M, \\
\dot x^M_a&=\{x^M_a,\mathcal H \} = \sum_{b=1}^N \sum_{\nu=1}^M \{x^M_a, x^\nu_b\} \\ 
&=\sum_{b=1}^N \sum_{\nu=1}^M \left( -\delta_{ab} f^\nu_a(x^1_a,\ldots,x^{M-1}_a) - \delta^{M\nu} \tau_{ab}(x^1_a,\ldots,x^{M-1}_a,x^1_b,\ldots,x^{M-1}_b) \right) \\
&=-\sum_{\nu=1}^{M-1} f^\nu_a(x^1_a,\ldots,x^{M-1}_a) - \sum_{b=1}^N \tau_{ab}(x^1_a,\ldots,x^{M-1}_a,x^1_b,\ldots,x^{M-1}_b) ,
\end{split}
\end{equation}
which coincides with \eqref{eq:M_N_ODEsystem}. The fact that $\{x^1_a,\ldots,x^{M-1}_a\}$ are Abelian Poisson subalgebras for all $a \in \{1,\ldots,N\}$ implies that all the Jacobi identities are trivially satisfied, since the only non-trivial Poisson brackets are contained in some product of this commutative subalgebras, and therefore $\{\{g,g'\},g''\}=0$ for any functions $g,g',g''$.
\end{proof}

Note that we have assumed that the functions defining the dynamical system do not depend of $x^M$. This is similar to the assumption in Proposition \ref{prop:compartmentalHamiltonian}, but there we argued that the system is the most general for models with constant population, since we can restrict the dynamics to $\mathcal H^{-1}(1)$ and we could write $x^M=1-x^1-\ldots -x^{M-1}$. This is no longer true for interacting systems, since the population of each $\mathcal P_a$ is not conserved. However, note that the total population for the complete system is indeed conserved, since
\begin{equation}
\sum_{a=1}^N \sum_{\mu=1}^M \dot x^\mu_a = \sum_{a=1}^N \sum_{b=1}^N \tau_{ab}(x^1_a,\ldots,x^{M-1}_a,x^1_b,\ldots,x^{M-1}_b) = 0 ,
\end{equation}
due to the skew-symmetry of $\tau_{ab}(x^1_a,\ldots,x^{M-1}_a,x^1_b,\ldots,x^{M-1}_b)$ under the exchange $a \leftrightarrow b$.

\section{Bi-Hamiltonian structure of epidemic models}
\label{sec:biHamiltonian}

As it was mentioned in the Introduction, the original SIR system is bi-Hamiltonian \cite{Nutku1990sir}. In fact, some specific generalizations of this system were proved to be bi-Hamiltonian in \cite{Gumral1993}. In this Section we show that the systems given in Examples \ref{ex:SIRSendemic}-\ref{ex:SIRvaccination} are bi-Hamiltonian. Moreover, by coupling $N$ of those bi-Hamiltonian systems we obtain different families of $3N$-dimensional bi-Hamiltonian systems.

\subsection{Endemic SIRS}

\begin{proposition}
\label{pr:endemicSIRSbiham}
The system given in Example \ref{ex:SIRSendemic}, with $\alpha$ and $\beta$ constants, is bi-Hamiltonian with respect to the following two compatible Poisson structures and Hamiltonian functions:
\begin{fleqn}
\begin{equation}
\begin{cases}
\text{Hamiltonian: } \mathcal H_{1}=S+I+R, \vspace{0.2cm}\\
\text{Poisson structure: } \{S,I\}_{1}=0, \quad \{S,R\}_{1}=-\beta S\,I+\mu\,I, \quad \{I,R\}_{1}=\beta\,S\,I-(\alpha+\mu)\,I.
\end{cases}
\end{equation}
\end{fleqn}
\begin{fleqn}
\begin{equation}
\begin{cases}
\text{Hamiltonian: } \mathcal H_{2}=-\left(R+\frac{\alpha}{\beta}\log(\beta S - \mu) \right), \vspace{0.2cm}\\
\text{Poisson structure: } \{S,I\}_{2}=-\beta\,S\,I+\mu\,I, \quad \{S,R\}_{2}=\beta S\,I-\mu\,I, \quad \{I,R\}_{2}=-\beta\,S\,I + \mu\,I.
\end{cases}
\end{equation}
\end{fleqn}
\end{proposition}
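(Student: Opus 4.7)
The plan is to establish the bi-Hamiltonian structure by four independent verifications: the Hamilton equations for both pairs reproduce the SIRS dynamics \eqref{eq:SIRSendemic}, and both the bracket $\{\cdot,\cdot\}_2$ and the sum $\{\cdot,\cdot\}_1+\{\cdot,\cdot\}_2$ are genuine Poisson structures (i.e.\ satisfy Jacobi). Since we are in three dimensions, in each Poisson check only the single fundamental cycle $\mathrm{Jac}(S,I,R)$ has to be computed.

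First, I would dispose of the pair $(\mathcal H_1,\{\cdot,\cdot\}_1)$ by noting that it is exactly the specialization of Proposition \ref{prop:generalizedSIRhamiltonian} with $\varphi_1(S,I)=\mu I$ and $\varphi_2(S,I)=-\mu I$, so no further work is needed: both Hamilton's equations and Jacobi for $\{\cdot,\cdot\}_1$ come for free. Next I would verify the dynamics in the second structure. The key observation is that the three non-trivial brackets of $\{\cdot,\cdot\}_2$ all share the common factor $(\beta S-\mu)I$, and the logarithmic term $-\tfrac{\alpha}{\beta}\log(\beta S-\mu)$ in $\mathcal H_2$ contributes a derivative $-\alpha/(\beta S-\mu)$ under $\partial_S$ which, when multiplied against $\{S,\cdot\}_2$ or $\{I,\cdot\}_2\propto(\beta S-\mu)$, produces the constant recovery term $-\alpha I$. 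A direct computation then gives
\begin{equation*}
\dot S=\{S,\mathcal H_2\}_2=-\{S,R\}_2=-\beta SI+\mu I,\quad \dot I=\{I,\mathcal H_2\}_2=-\{I,R\}_2-\tfrac{\alpha}{\beta S-\mu}\{I,S\}_2=\beta SI-(\alpha+\mu)I,
\end{equation*}
and $\dot R=\{R,\mathcal H_2\}_2=-\tfrac{\alpha}{\beta S-\mu}\{R,S\}_2=\alpha I$, matching \eqref{eq:SIRSendemic}.

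For the Jacobi identity of $\{\cdot,\cdot\}_2$ I would use the factorization just noted and write all three brackets as $\pm fI\cdot$(sign), with $f:=\beta S-\mu$. Using Leibniz on $\{SI,\cdot\}_2$, the three cyclic terms $\{\{S,I\}_2,R\}_2$, $\{\{I,R\}_2,S\}_2$, $\{\{R,S\}_2,I\}_2$ all come out proportional to $fI$ times a polynomial in $S,I$; the three polynomial pieces cancel after the manipulation $[\beta S-\beta I-\mu]-[\beta S-\mu]+\beta I=0$. The compatibility of the two structures is then almost immediate: in the sum bracket the entries $\{S,R\}_{1+2}$ vanish and $\{I,R\}_{1+2}=-\alpha I$, $\{S,I\}_{1+2}=-(\beta S-\mu)I$, so only two of the three cyclic pieces are nonzero and they cancel at sight because $\{\beta S-\mu,R\}_{1+2}=\beta\{S,R\}_{1+2}=0$ and $\{-\alpha I,S\}_{1+2}=\alpha\{S,I\}_{1+2}$.

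The only mildly delicate step is the Jacobi check for $\{\cdot,\cdot\}_2$, where the three cycles are individually nonzero and the cancellation is algebraic rather than structural; the trick is to pull out the common factor $(\beta S-\mu)I$ before expanding, which turns the identity into a one-line polynomial check. Everything else is routine differentiation, and the $R$-dependence causes no issue because $\mathcal H_2$ is linear in $R$ and the brackets $\{R,R\}_i$ vanish.
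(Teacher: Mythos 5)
Your proposal is correct: the Hamilton's equations for both pairs, the Jacobi identity for $\{\cdot,\cdot\}_2$, and the Jacobi identity for the sum bracket are all verified accurately, and the overall skeleton (dispose of the first pair via Proposition \ref{prop:generalizedSIRhamiltonian}, then check the second pair and compatibility directly) matches the paper's. The one place where you take a genuinely different route is the Jacobi identity for $\{\cdot,\cdot\}_2$. You pull out the common factor $(\beta S-\mu)I$ and cancel the resulting polynomials by hand; the paper instead observes that all three fundamental brackets equal $\pm(-\beta SI+\mu I)$, so the cyclic sum collapses to a single bracket $\{-\beta SI+\mu I,\,S+I+R\}_2=\{-\beta SI+\mu I,\mathcal H_1\}_2$, which vanishes because $\mathcal H_1$ is a Casimir of $\{\cdot,\cdot\}_2$ (a fact checked by inspection, without needing Jacobi). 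Both arguments are valid and amount to the same cancellation, but the paper's version is structural — it works for any bracket of the form $\{S,I\}_2=g$, $\{S,R\}_2=-g$, $\{I,R\}_2=g$ with $g=g(S,I)$ — and, more importantly, it explicitly records that $\mathcal H_1$ is a Casimir of the second structure, a property the paper leans on later (Section \ref{sec:exactsol}, where Casimirs are used to produce exact solutions, and the remark following Proposition \ref{pr:SIRvaccS_biham}). Your compatibility check is actually slightly more detailed than the paper's, which simply asserts the Jacobi identity for the sum bracket holds as a generalization of Nutku's structure.
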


\begin{proof}
The dynamical system \eqref{eq:SIRSendemic} is obtained as
\begin{equation}
\begin{split}
&\dot S = \{S,\mathcal H_1 \}_1 = \{S,\mathcal H_2 \}_2 = - \beta S I + \mu\,I , \\
&\dot I = \{I,\mathcal H_1 \}_1 = \{I,\mathcal H_2 \}_2 = \beta S I - (\alpha+\mu) I, \\
&\dot R = \{R,\mathcal H_1 \}_1 = \{R,\mathcal H_2 \}_2 = \alpha I . \\
\end{split}
\end{equation}
Also it is easy to check that $\{ f,\mathcal H_{1} \}_2 =0$, so $\mathcal H_{1}$ is a Casimir function for $\{\cdot,\cdot \}_2$. Obviously $\{\cdot,\cdot \}_1$ is a Poisson structure because it is a particular case of \eqref{eq:poissonSI0}. The Jacobi identity for the brackets $\{\cdot,\cdot \}_2$ is fulfilled
\begin{equation}
\mathrm{Jac}_2 (S,I,R) = \{\{S,I\}_2,R\}_2 + \{\{R,S\}_2,I\}_2 + \{\{I,R\}_2,S\}_2 = \{-\beta\,S\,I+\mu\,I, \mathcal H_{1} \}_2 = 0 ,\, 
\end{equation}
since $\mathcal H_{1}$ is a Casimir function for $\{\cdot,\cdot \}_2$. Finally, the two Poisson structures are compatible since the brackets 
\begin{equation}
\begin{split}
&\{S,I\} = \{S,I\}_1 + \{S,I\}_2 = - \beta S I + \mu\,I, \\ 
&\{S,R\} = \{S,R\}_1 + \{S,R\}_2 = 0, \\ 
&\{I,R\} = \{I,R\}_1 + \{I,R\}_2 = - \alpha I, \\
\end{split}
\end{equation}
again define a Poisson structure with vanishing Jacobi identity. In fact, this is the generalization to $\mu \neq 0$ of the Poisson structure considered in \cite{Nutku1990sir}.
\end{proof}

Several bi-Hamiltonian endemic SIRS systems as the ones considered in the previous Proposition can be coupled to each other by keeping a Hamiltonian description similar to the one given by Proposition \ref{pr:interacting_generalized_SIR}. It is therefore natural to wonder whether this description maintains the bi-Hamiltonian structure for the coupled system. This is indeed the case, as we prove in the following

\begin{proposition}
\label{prop:SIRSendemic_N_biham}
The particular case of the Hamiltonian system defined in Proposition \ref{pr:interacting_generalized_SIR}, with $\alpha_a$ and $\beta_a$ constants, $\varphi_{a,1}=\mu_a I_a$, $\varphi_{a,2}=-\mu_a I_a$ and $\tau_{a b} (S_a,I_a,S_b,I_b)= \tau_{a b}$, is bi-Hamiltonian, with the second Poisson structure given by the non-zero fundamental Poisson brackets
\begin{equation}
\begin{split}
&\{S_a,I_a\}_{2}=-\beta_a\,S_a\,I_a+\mu_a\,I_a, \\ 
&\{S_a,R_a\}_{2}=\beta_a S_a\,I_a-\mu_a\,I_a, \\ 
&\{I_a,R_a\}_{2}=-\beta_a \,S_a\,I_a + \mu_a\,I_a, \\
&\{R_a,R_b\}_{2}= \tau_{a b},
\end{split}
\end{equation}
and the Hamiltonian being
\begin{equation}
\mathcal H_{2} =- \sum_{k=1}^N \left(R_k+\frac{\alpha_k}{\beta_k}\log(\beta_k S_k - \mu_k) \right) .
\end{equation}
\end{proposition}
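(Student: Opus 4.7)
The plan is to verify three things: (i) Hamilton's equations computed from $\mathcal H_2$ and $\{\cdot,\cdot\}_2$ reproduce the dynamics of Proposition~3 specialized to $\varphi_{a,1}=\mu_a I_a$, $\varphi_{a,2}=-\mu_a I_a$, and constant $\tau_{ab}$; (ii) $\{\cdot,\cdot\}_2$ satisfies the Jacobi identity, so it is a genuine Poisson structure; and (iii) $\{\cdot,\cdot\}_1$ and $\{\cdot,\cdot\}_2$ are compatible in the sense that their sum is again Poisson. Structurally the argument is an $N$-population lift of Proposition~4, with the interaction bracket $\{R_a,R_b\}_2=\tau_{ab}$ providing the only genuinely new ingredient.

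First I would compute Hamilton's equations directly. Since $\mathcal H_2$ splits as a sum of single-population terms and the brackets $\{\cdot,\cdot\}_2$ vanish between variables of distinct populations except for $\{R_a,R_b\}_2=\tau_{ab}$, one finds $\dot S_a=-\{S_a,R_a\}_2=-\beta_a S_a I_a+\mu_a I_a$ and
\begin{equation*}
\dot I_a=-\{I_a,R_a\}_2-\tfrac{\alpha_a}{\beta_a S_a-\mu_a}\{I_a,S_a\}_2,
\end{equation*}
where the factor $(\beta_a S_a-\mu_a)$ cancels against $\{I_a,S_a\}_2=\beta_a S_a I_a-\mu_a I_a$ to yield the missing $-\alpha_a I_a$; finally $\dot R_a=\alpha_a I_a-\sum_{k\neq a}\tau_{ak}$, where the cross-population sum arises from the $-R_k$ pieces of $\mathcal H_2$ paired with $\{R_a,R_k\}_2=\tau_{ak}$. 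This matches \eqref{eq:generalizedSIR_Npop} under the endemic SIRS choice.

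Next, for the Jacobi identity of $\{\cdot,\cdot\}_2$, I would split triples into intra- and inter-population classes. Intra-population triples $(S_a,I_a,R_a)$ reduce to the single-population identity already verified in Proposition~4. Any triple drawn from two or three distinct populations contains either a vanishing inter-population bracket (all brackets of $S_a$ or $I_a$ with any variable of population $b\neq a$, and all brackets of $R_a$ with $S_b$ or $I_b$), or else a bracket of the form $\{\tau_{ab},\,\cdot\,\}_2$ which is zero because $\tau_{ab}$ is constant. For compatibility one computes
\begin{equation*}
\{S_a,I_a\}_{1+2}=-\beta_a S_a I_a+\mu_a I_a,\quad \{S_a,R_a\}_{1+2}=0,\quad \{I_a,R_a\}_{1+2}=-\alpha_a I_a,\quad \{R_a,R_b\}_{1+2}=0,
\end{equation*}
so the coupling $\tau_{ab}$ cancels exactly and the sum bracket decomposes into $N$ decoupled copies of the single-population compatible bracket from Proposition~4, whose Jacobi identity is already established. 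The main obstacle is sign bookkeeping: one must check that $\tau_{ab}$ enters the two structures with opposite signs so that compatibility holds, while the sign chain $(-R_k)$ in $\mathcal H_2$ combined with $\{R_a,R_k\}_2=\tau_{ak}$ yields the correct $-\sum_{k\neq a}\tau_{ak}$ in $\dot R_a$; everything else is a transparent reduction to Propositions~3 and~4.
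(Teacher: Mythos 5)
Your proposal is correct and follows essentially the same route as the paper's proof: verify Hamilton's equations for $\mathcal H_2$ directly, reduce the intra-population Jacobi identities to Proposition \ref{pr:endemicSIRSbiham}, kill the inter-population ones using the constancy of $\tau_{ab}$, and check compatibility by summing the brackets and reducing again to Proposition \ref{pr:endemicSIRSbiham}. Your explicit remark that $\{R_a,R_b\}_1+\{R_a,R_b\}_2=0$ is a nice touch the paper leaves implicit (though note that even a nonzero constant sum would not obstruct the Jacobi identity); otherwise the two arguments coincide.
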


\begin{proof}

The system defined by the second Hamiltonian structure is given by  
\begin{equation}
\begin{split}
&\dot{S}_a = \{S_a,\mathcal H_{2} \}_2 = - \beta_a S_a I_a + \mu_a I_a, \\
&\dot{I}_a = \{I_a,\mathcal H_{2} \}_2 = \beta_a S_a I_a -(\alpha_a + \mu_a) I_a, \\
&\dot{R}_a = \{R_a,\mathcal H_{2} \}_2 = \alpha_a I_a - \sum_{k\neq a}^N \tau_{a k}, \\
\end{split}
\end{equation}
which coincides with \eqref{eq:generalizedSIR_Npop} setting $\varphi_{a,1}=\mu_a I_a$, $\varphi_{a,2}=-\mu_a I_a$ and $\tau_{a b} (S_a,I_a,S_b,I_b)= \tau_{a b}$.

The proof that $\{\cdot,\cdot\}_2$ is a Poisson structure is straightforward since the $N$ Jacobi identities involving variables from a single population are satisfied by Proposition \ref{pr:endemicSIRSbiham}, while the rest of the Jacobi identities are fulfilled just by noting that 
$\{\{X,Y\},Z\}=0$ whenever $X,Y,Z$ do not belong to the same population, due to the fact that the only non-vanishing bracket among different populations is a constant function.

Finally, the compatibility between $\{\cdot,\cdot\}_1$ and $\{\cdot,\cdot\}_2$ is proven by computing the Jacobi identities for
\begin{equation}
\begin{split}
&\{S_a,I_a\} = \{S_a,I_a\}_1 + \{S_a,I_a\}_2 = - \beta_a S_a I_a + \mu_a \,I_a, \\ 
&\{S_a,R_a\} = \{S_a,R_a\}_1 + \{S_a,R_a\}_2 = 0, \\ 
&\{I_a,R_a\} = \{I_a,R_a\}_1 + \{I_a,R_a\}_2 = - \alpha_a I_a, \\
\end{split}
\end{equation}
which was already proven in Proposition \ref{pr:endemicSIRSbiham}.  
\end{proof}

\subsection{A SIR model with a vaccination function}

Similar results to those of Propositions \ref{pr:endemicSIRSbiham} and \ref{prop:SIRSendemic_N_biham} can be obtained for the system from Example \ref{ex:SIRvaccination} a) and b), corresponding to a SIR system in which the susceptible population is vaccinated with a rate proportional to $I$ and $S$, respectively. In particular, these systems are bi-Hamiltonian, and $N$  such systems can be coupled by preserving a bi-Hamiltonian structure for the corresponding $3N$-dimensional system. As the proofs of these results are similar to those of Propositions \ref{pr:endemicSIRSbiham} and \ref{prop:SIRSendemic_N_biham} we omit them for the sake of brevity.

\begin{proposition}
\label{pr:SIRvaccI_biham}
The system given in Example \ref{ex:SIRvaccination} a), with $\alpha$ and $\beta$ constants and setting $v(S,I)=v I$, is bi-Hamiltonian with respect to the following two compatible Poisson structures and Hamiltonian functions:
\begin{fleqn}
\begin{equation}
\begin{cases}
\text{Hamiltonian: } \mathcal H_{1}=S+I+R, \vspace{0.2cm}\\
\text{Poisson structure: } \{S,I\}_{1}=0, \quad \{S,R\}_{1}=-\beta S\,I- v\,I, \quad \{I,R\}_{1}=\beta\,S\,I-\alpha\,I.
\end{cases}
\end{equation}
\end{fleqn}
\begin{fleqn}
\begin{equation}
\begin{cases}
\text{Hamiltonian: } \mathcal H_{2}=-\left( R+\frac{(\alpha+v)}{\beta}\log(\beta S + v) \right), \vspace{0.2cm}\\
\text{Poisson structure: } \{S,I\}_{2}=-\beta\,S\,I-v\,I, \quad \{S,R\}_{2}=\beta S\,I+v\,I, \quad \{I,R\}_{2}=-\beta\,S\,I -v\,I.
\end{cases}
\end{equation}
\end{fleqn}
\end{proposition}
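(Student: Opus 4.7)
The plan is to follow the same strategy employed in the proof of Proposition \ref{pr:endemicSIRSbiham}, treating the vaccination rate $v$ as a structural parameter analogous to $\mu$. Four things must be verified: (i) both Hamiltonian structures reproduce the dynamical system \eqref{eq:SIRvaccination_general} specialized to $v(S,I)=vI$; (ii) $\{\cdot,\cdot\}_1$ is a genuine Poisson bracket; (iii) $\{\cdot,\cdot\}_2$ is a genuine Poisson bracket; and (iv) the two are compatible.

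First I would check (ii) for free: the bracket $\{\cdot,\cdot\}_1$ is simply \eqref{eq:poissonSI0} with $\varphi_1(S,I)=-vI$ and $\varphi_2(S,I)=0$, so Proposition \ref{prop:generalizedSIRhamiltonian} guarantees that it satisfies Jacobi and produces the correct equations $\dot S=-\beta SI-vI$, $\dot I=\beta SI-\alpha I$, $\dot R=\alpha I+vI$ via $\mathcal H_1=S+I+R$. Next, for the second structure I would compute the gradient of $\mathcal H_2=-\bigl(R+\tfrac{\alpha+v}{\beta}\log(\beta S+v)\bigr)$, namely $\partial_S\mathcal H_2=-\tfrac{\alpha+v}{\beta S+v}$, $\partial_I\mathcal H_2=0$, $\partial_R\mathcal H_2=-1$, and verify Hamilton's equations. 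The key cancellation occurs in $\dot I=\{I,\mathcal H_2\}_2$: the prefactor $\{I,S\}_2=\beta SI+vI=I(\beta S+v)$ exactly cancels the denominator of $\partial_S\mathcal H_2$, producing $-I(\alpha+v)+I(\beta S+v)=\beta SI-\alpha I$. An analogous cancellation yields $\dot R=\alpha I+vI$.

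For step (iii), the crucial observation is that $\mathcal H_1=S+I+R$ is a Casimir of $\{\cdot,\cdot\}_2$: summing the three brackets in each row of $\{\cdot,\cdot\}_2$ gives zero. Because $\{S,R\}_2$, $\{I,R\}_2$ and $\{S,I\}_2$ are all scalar multiples of the single function $\beta SI+vI$, the Jacobiator $\mathrm{Jac}_2(S,I,R)$ collapses to $\{-\beta SI-vI,\mathcal H_1\}_2$, which vanishes by the Casimir property. This is exactly the same trick as in the endemic SIRS case.

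Finally, for (iv) I would form the sum bracket, which reads $\{S,I\}=-\beta SI-vI$, $\{S,R\}=0$, $\{I,R\}=-(\alpha+v)I$, and check its Jacobi identity directly via Leibniz on the only nontrivial pairing. I expect the two contributions $\{\{S,I\},R\}$ and $\{\{I,R\},S\}$ to produce matching terms of the form $\pm(\alpha\beta SI+\alpha vI+v\beta SI+v^2I)$ that cancel, with $\{\{R,S\},I\}=0$. The main (only) obstacle is bookkeeping in this last verification and in the computation of $\dot I$ with the logarithmic Hamiltonian; no conceptual difficulty arises, since the Casimir argument reduces all structural checks to algebraic manipulations already carried out for the endemic SIRS model.
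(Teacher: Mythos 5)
Your proposal is correct and follows exactly the strategy the paper intends: the paper omits this proof, stating it is analogous to that of Proposition \ref{pr:endemicSIRSbiham}, and your four-step verification (dynamics from both structures, Jacobi for $\{\cdot,\cdot\}_1$ via Proposition \ref{prop:generalizedSIRhamiltonian}, Jacobi for $\{\cdot,\cdot\}_2$ via the Casimir property of $\mathcal H_1$, and compatibility of the sum bracket) is precisely that argument with $\mu$ replaced by the vaccination parameter $v$. All the computations you sketch, including the cancellation of $\beta S+v$ in $\dot I$ and $\dot R$ and the cancellation of the $(\alpha+v)(\beta SI+vI)$ terms in the Jacobiator of the sum bracket, check out.
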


\begin{proposition}
\label{prop:vacc_I_N_biham}
The particular case of the Hamiltonian system defined in Proposition \ref{pr:interacting_generalized_SIR}, with $\alpha_a$ and $\beta_a$ constants, $\varphi_{a,1}=-v_a I_a$, $\varphi_{a,2}=0$ and $\tau_{a b} (S_a,I_a,S_b,I_b)= \tau_{a b}$, is bi-Hamiltonian, with the second Poisson structure defined by the non-zero fundamental Poisson brackets
\begin{equation}
\begin{split}
&\{S_a,I_a\}_{2}=-\beta_a\,S_a\,I_a-v_a\,I_a, \\ 
&\{S_a,R_a\}_{2}=\beta_a S_a\,I_a+v_a\,I_a, \\ 
&\{I_a,R_a\}_{2}=-\beta\,S_a\,I_a -v_a\,I_a, \\
&\{R_a,R_b\}_{2}= \tau_{a b},
\end{split}
\end{equation}
and the Hamiltonian 
\begin{equation}
\mathcal H_{2} =- \sum_{k=1}^N \left(R_k+\frac{(\alpha_k+v_k)}{\beta_k} \log(\beta_k S_k +v_k) \right).
\end{equation}
\end{proposition}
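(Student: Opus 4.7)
The plan is to follow the template of Proposition \ref{prop:SIRSendemic_N_biham} verbatim, using Proposition \ref{pr:SIRvaccI_biham} as the single-population base case and exploiting the fact that the only cross-population bracket $\{R_a,R_b\}_2 = \tau_{ab}$ is constant. Three checks are required: (i) Hamilton's equations for the pair $(\{\cdot,\cdot\}_2, \mathcal H_2)$ reproduce the dynamics of Proposition \ref{pr:interacting_generalized_SIR} with the specified $\varphi_{a,j}$ and $\tau_{ab}$; (ii) the tensor $\{\cdot,\cdot\}_2$ satisfies the Jacobi identity; and (iii) $\{\cdot,\cdot\}_1$ and $\{\cdot,\cdot\}_2$ are compatible.

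For (i), the key observation is that the only non-vanishing partial derivatives of $\mathcal H_2$ are $\partial_{S_k}\mathcal H_2 = -(\alpha_k+v_k)/(\beta_k S_k + v_k)$ and $\partial_{R_k}\mathcal H_2 = -1$. Since $\{S_a, I_a\}_2 = -(\beta_a S_a + v_a)I_a$ and $\{R_a, S_a\}_2 = -(\beta_a S_a + v_a)I_a$, the denominator of the logarithmic derivative cancels precisely, leaving purely polynomial contributions. A short computation then yields $\dot S_a = -\beta_a S_a I_a - v_a I_a$, $\dot I_a = \beta_a S_a I_a - \alpha_a I_a$, and $\dot R_a = \alpha_a I_a + v_a I_a - \sum_{k\neq a}\tau_{ak}$, which matches \eqref{eq:generalizedSIR_Npop} under $\varphi_{a,1} = -v_a I_a$, $\varphi_{a,2} = 0$.

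For (ii) and (iii), triples are split according to which populations their entries belong to. Single-population triples are handled by Proposition \ref{pr:SIRvaccI_biham}, where both the Jacobi identity for $\{\cdot,\cdot\}_2$ and the compatibility with $\{\cdot,\cdot\}_1$ have already been established. For any triple spanning two or more populations, every inner bracket $\{X_a, Y_b\}_j$ with $a\neq b$ is either zero or equal to the constant $\tau_{ab}$; consequently every nested expression $\{\{X,Y\}_j,Z\}_j$ appearing in the corresponding Jacobiator vanishes, either because the inner bracket itself is zero or because one is taking a bracket against a constant. The same reasoning applies verbatim to the combined bracket $\{\cdot,\cdot\}_1 + \{\cdot,\cdot\}_2$, so compatibility reduces to the single-population statement.

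The main obstacle is purely algebraic: one must check that the coefficients $(\alpha_k+v_k)/\beta_k$ in the logarithmic term of $\mathcal H_2$ are precisely those required for the cancellations in step (i) to produce the extra $-v_a I_a$ contribution in $\dot S_a$ without introducing a matching term in $\dot I_a$. Once this bookkeeping is carried out, the remainder of the proof is identical to that of Proposition \ref{prop:SIRSendemic_N_biham}, modulo the substitution $\mu_a \to -v_a$ in the Poisson structure and the shift $\alpha_a \to \alpha_a + v_a$ in the logarithmic coefficient of $\mathcal H_2$.
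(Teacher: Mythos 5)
Your proposal is correct and follows exactly the route the paper intends: the paper omits this proof, stating only that it is analogous to Propositions \ref{pr:endemicSIRSbiham} and \ref{prop:SIRSendemic_N_biham}, and your three-step verification (Hamilton's equations via the cancellation of $\beta_a S_a+v_a$ against $\partial_{S_a}\mathcal H_2$, Jacobi identities split into single- and multi-population triples, compatibility reduced to the single-population case) together with the dictionary $\mu_a\to -v_a$, $\alpha_a\to\alpha_a+v_a$ is precisely that analogue. The only cosmetic caveat is that for a mixed triple such as $(S_a,I_a,R_b)$ the term $\{\{S_a,I_a\}_2,R_b\}_2$ vanishes not because the inner bracket is zero or constant but because it is a function of $S_a,I_a$ alone, which Poisson-commute with everything in population $b$ --- the same terse justification the paper itself uses for Proposition \ref{prop:SIRSendemic_N_biham}.
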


\begin{proposition}
\label{pr:SIRvaccS_biham}
The system given in Example \ref{ex:SIRvaccination} b), with $\alpha$ and $\beta$ constants, setting $v(S,I)=v S$, is bi-Hamiltonian with respect to the following two compatible Poisson structures and Hamiltonian functions:
\begin{fleqn}
\begin{equation}
\begin{cases}
\text{Hamiltonian: } \mathcal H_{1}=S+I+R, \vspace{0.2cm}\\
\text{Poisson structure: } \{S,I\}_{1}=0, \quad \{S,R\}_{1}=-\beta S\,I- v\,S, \quad \{I,R\}_{1}=\beta\,S\,I-\alpha\,I.
\end{cases}
\end{equation}
\end{fleqn}
\begin{fleqn}
\begin{equation}
\begin{cases}
\text{Hamiltonian: } \mathcal H_{2}=-\left(R+\frac{\alpha}{\beta}\log S - \frac{v}{\beta}\log I \right) = -\left(R+\frac{1}{\beta}\log (S^\alpha I^{-v}) \right), \vspace{0.2cm}\\
\text{Poisson structure: } \{S,I\}_{2}=-\beta\,S\,I, \quad \{S,R\}_{2}=\beta S\,I, \quad \{I,R\}_{2}=-\beta\,S\,I.
\end{cases}
\end{equation}
\end{fleqn}
\end{proposition}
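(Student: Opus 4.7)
The plan is to mirror the proof of Proposition \ref{pr:endemicSIRSbiham} step by step, adapting only to the linear vaccination term $vS$ in place of $\mu I$. First I would verify that Hamilton's equations for both Poisson structures reproduce the dynamical system \eqref{eq:SIRvaccination_general} with $v(S,I)=vS$, namely $\dot S = -\beta SI - vS$, $\dot I = \beta SI - \alpha I$, $\dot R = \alpha I + vS$. For $\{\cdot,\cdot\}_1$ this is automatic since it is just the structure \eqref{eq:poissonSI0} with $\varphi_1 = -vS$ and $\varphi_2 = 0$, so Proposition \ref{prop:generalizedSIRhamiltonian} applies directly. For $\{\cdot,\cdot\}_2$, I would compute $\{S,\mathcal H_2\}_2$, $\{I,\mathcal H_2\}_2$, $\{R,\mathcal H_2\}_2$ using the Leibniz rule applied to $\mathcal H_2 = -R - (\alpha/\beta)\log S + (v/\beta)\log I$; the key point is that the logarithmic derivatives $-\alpha/(\beta S)$ and $v/(\beta I)$ combine with the brackets $\{\cdot,S\}_2$ and $\{\cdot,I\}_2$ (each proportional to $\beta SI$) to generate precisely the terms $-\alpha I$ and $-vS$ (with appropriate signs) in the equations of motion.

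Second, I would establish the Jacobi identity for $\{\cdot,\cdot\}_2$. The cleanest route is to observe that $\mathcal H_1 = S+I+R$ is a Casimir of $\{\cdot,\cdot\}_2$, since
\[
\{S,\mathcal H_1\}_2 = -\beta SI + \beta SI = 0, \quad \{I,\mathcal H_1\}_2 = \beta SI -\beta SI = 0, \quad \{R,\mathcal H_1\}_2 = -\beta SI + \beta SI = 0.
\]
In three dimensions, the existence of a Casimir reduces the Jacobi identity to a single check, and indeed $\mathrm{Jac}_2(S,I,R) = \{\{S,I\}_2,R\}_2 + \text{cyclic} = \{-\beta SI, \mathcal H_1\}_2 = 0$ since $\mathcal H_1$ is a Casimir. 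For $\{\cdot,\cdot\}_1$, the Jacobi identity is already contained in Proposition \ref{prop:generalizedSIRhamiltonian}.

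Finally, for compatibility I would form the sum bracket, obtaining
\[
\{S,I\} = -\beta SI, \qquad \{S,R\} = -vS, \qquad \{I,R\} = -\alpha I,
\]
and verify $\mathrm{Jac}(S,I,R) = 0$ by direct expansion. The calculation reduces to checking that $\beta v SI + \alpha\beta SI - v\beta SI - \alpha\beta SI = 0$, which is immediate. The only substantive difference with respect to Proposition \ref{pr:endemicSIRSbiham} is the presence of $vS$ rather than $\mu I$ in $\{S,R\}$, which makes the combined bracket $\{S,R\} = -vS$ linear in $S$ (instead of vanishing), but this does not spoil the cancellation in the Jacobiator. The step most prone to sign errors, and therefore the one I would double-check most carefully, is the verification that $\dot I = \beta SI - \alpha I$ under $\{\cdot,\cdot\}_2$: the contribution from $-(v/\beta)\log I$ would naively suggest a $vS$ correction, but it vanishes because $\{I,I\}_2 = 0$, so only the $-(\alpha/\beta)\log S$ piece contributes, yielding exactly $-\alpha I$.
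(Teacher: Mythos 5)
Your proposal is correct and follows exactly the route the paper intends: the paper omits this proof explicitly because it is the same argument as in Proposition \ref{pr:endemicSIRSbiham}, and your verification of Hamilton's equations for both structures, the collapse of the Jacobiator for $\{\cdot,\cdot\}_2$ to $\{-\beta SI,\mathcal H_1\}_2=0$ via the Casimir $\mathcal H_1$, and the direct check that the sum bracket $\{S,I\}=-\beta SI$, $\{S,R\}=-vS$, $\{I,R\}=-\alpha I$ satisfies the Jacobi identity all match that template. The computations, including the cancellation $\beta vSI+\alpha\beta SI-v\beta SI-\alpha\beta SI=0$, are accurate.
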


\begin{proposition}
\label{prop:vacc_I_N_biham}
The particular case of the Hamiltonian system defined in Proposition \ref{pr:interacting_generalized_SIR}, with $\alpha_a$ and $\beta_a$ constants, $\varphi_{a,1}=-v_a S_a$, $\varphi_{a,2}=0$ and $\tau_{a b} (S_a,I_a,S_b,I_b)= \tau_{a b}$, is bi-Hamiltonian, with the second Poisson structure defined by the non-zero fundamental Poisson brackets
\begin{equation}
\begin{split}
&\{S_a,I_a\}_{2}=-\beta_a\,S_a\,I_a, \\ 
&\{S_a,R_a\}_{2}=\beta_a S_a\,I_a, \\ 
&\{I_a,R_a\}_{2}=-\beta\,S_a\,I_a, \\
&\{R_a,R_b\}_{2}= \tau_{a b},
\end{split}
\end{equation}
and the Hamiltonian 
\begin{equation}
\mathcal H_{2} =- \sum_{k=1}^N \left(R_k+\frac{\alpha_k}{\beta_k} \log S_k - \frac{v_k}{\beta_k} \log I_k \right).
\end{equation}
\end{proposition}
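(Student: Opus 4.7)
The plan is to mirror the strategy used for Propositions \ref{pr:endemicSIRSbiham}--\ref{prop:SIRSendemic_N_biham}, replacing the endemic-SIRS single-population calculation with the vaccination-$vS$ calculation of Proposition \ref{pr:SIRvaccS_biham}. First I would check that Hamilton's equations for $(\mathcal{H}_2,\{\cdot,\cdot\}_2)$ reproduce the dynamics \eqref{eq:generalizedSIR_Npop} specialized to $\varphi_{a,1}=-v_a S_a$, $\varphi_{a,2}=0$ and constant $\tau_{ab}$. Because $\mathcal{H}_2$ splits as a sum over populations and the only cross-population bracket is $\{R_a,R_b\}_2=\tau_{ab}$, the computation reduces to $N$ independent copies of the single-population case together with an extra constant contribution $-\sum_{b\neq a}\tau_{ab}$ to $\dot R_a$. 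The key single-population identities are the same as in Proposition \ref{pr:SIRvaccS_biham}: $\{S_a,\log I_a\}_2=-v_a S_a/v_a\cdot(-\beta_a S_a)$, etc., so that $\dot S_a=-\beta_a S_a I_a-v_a S_a$, $\dot I_a=\beta_a S_a I_a-\alpha_a I_a$, and $\dot R_a=\alpha_a I_a+v_a S_a-\sum_{b\neq a}\tau_{ab}$, matching \eqref{eq:generalizedSIR_Npop}.

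Second, I would verify that $\{\cdot,\cdot\}_2$ satisfies the Jacobi identity. The intra-population identities $\mathrm{Jac}_2(S_a,I_a,R_a)=0$ follow directly from Proposition \ref{pr:SIRvaccS_biham}. For the mixed identities involving variables from two or three distinct populations, the crucial observation is that $\tau_{ab}$ is a constant: in every such triple, each inner bracket $\{X,Y\}_2$ is either a constant (so its bracket with anything vanishes) or is a smooth function of the variables of a single population $\mathcal{P}_c$, whose bracket with any variable from $\mathcal{P}_{c'}$ with $c'\neq c$ vanishes since the only non-zero cross-population bracket, $\{R_c,R_{c'}\}_2=\tau_{cc'}$, is independent of those variables. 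This makes every mixed Jacobi identity collapse term-by-term.

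Third, compatibility with $\{\cdot,\cdot\}_1$ follows from a clean cancellation: adding the two structures one finds $\{S_a,I_a\}=-\beta_a S_a I_a$, $\{S_a,R_a\}=-v_a S_a$, $\{I_a,R_a\}=-\alpha_a I_a$ and, crucially, $\{R_a,R_b\}=-\tau_{ab}+\tau_{ab}=0$, so the sum bracket decouples into $N$ independent three-dimensional brackets, each of which is known to be Poisson from the single-population analysis in Proposition \ref{pr:SIRvaccS_biham}. The main subtlety, and the only place where the argument could break, is the Jacobi identity for $\{\cdot,\cdot\}_2$ in the presence of the cross-population coupling; the choice of constant $\tau_{ab}$ is precisely what guarantees that the mixed identities are trivial. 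If one wished to upgrade $\tau_{ab}$ to a genuine function of $(S_a,I_a,S_b,I_b)$, the mixed Jacobi identities would impose nontrivial PDE constraints on $\tau_{ab}$ and on the transfer rates, and the bi-Hamiltonian extension would no longer be automatic.
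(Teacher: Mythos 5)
Your proof is correct and follows exactly the route the paper intends: the paper omits this proof as ``similar to those of Propositions \ref{pr:endemicSIRSbiham} and \ref{prop:SIRSendemic_N_biham}'', and your three steps --- Hamilton's equations reducing to $N$ copies of the single-population computation of Proposition \ref{pr:SIRvaccS_biham} plus the constant term $-\sum_{b\neq a}\tau_{ab}$ in $\dot R_a$, the Jacobi identity for $\{\cdot,\cdot\}_2$ via the constancy of $\tau_{ab}$ and the vanishing of all other cross-population brackets, and compatibility via the cancellation $\{R_a,R_b\}_1+\{R_a,R_b\}_2=-\tau_{ab}+\tau_{ab}=0$ so that the sum decouples into $N$ three-dimensional Poisson brackets --- are precisely that argument. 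The only blemish is the garbled inline identity ``$\{S_a,\log I_a\}_2=-v_a S_a/v_a\cdot(-\beta_a S_a)$'', which should read $\{S_a,\log I_a\}_2=\tfrac{1}{I_a}\{S_a,I_a\}_2=-\beta_a S_a$ (so that the term $+\tfrac{v_a}{\beta_a}\log I_a$ in $\mathcal H_2$ contributes $-v_a S_a$ to $\dot S_a$); the displayed equations of motion you state are nevertheless correct.
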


Note that in Propositions \ref{pr:endemicSIRSbiham}, \ref{pr:SIRvaccI_biham} and \ref{pr:SIRvaccS_biham} the second Hamiltonian $\mathcal H_2$ is not a Casimir for the first Poisson algebra $\{\cdot,\cdot\}_1$. However, in each case the Casimir of this first Poisson algebra is obtained from $\mathcal H_2$ by substituting $R \to S+I$, so these two functions indeed coincide on the hypersurface of constant population. Obviously, this Casimir could have been taken equivalently as the second Hamiltonian function. However, this is no longer true for the interacting case.


\section{Exact analytical solutions from Casimir functions}
\label{sec:exactsol}

Although the exact integrability of different epidemiological models has attracted some attention (see for instance \cite{Nucci2004,LA2004}), only recently the exact analytical solution of the original SIR system has been found \cite{Miller2012,Harko2014}. In this Section we show that  the Hamiltonian structure is useful in order to find exact analytical solutions for the systems from Examples \ref{ex:SIRSendemic} -- \ref{ex:SIRvaccination}, which are generalizations of the original SIR system. In particular, we use the fact that any 3-dimensional Poisson manifold has one Casimir function in order to reduce the 3-dimensional dymensional system to a 1-dimensional problem, which  can be always integrated by quadratures. 

For all the following systems, we solve the differential equations for the initial conditions $S(0)=S_0$, $I(0) = 1-S_0$ and $R(0)=0$. This is equivalent to consider the dynamics restricted to the symplectic leaf given by the value $\mathcal C = 0$ of the Casimir function.

\begin{example}[Analytical solution of the SIRS endemic model]

Recall from Proposition \ref{pr:endemicSIRSbiham} that a Casimir function for the system of Example \ref{ex:SIRSendemic} is given by 
\begin{equation}
\mathcal C = -\left(R+\frac{\alpha}{\beta}\log \left(\frac{\beta S - \mu}{\beta S_0 - \mu} \right)\right)  \, ,
\end{equation}
where we have fixed the level set $\mathcal C = 0$ onto which the dynamics takes place by means of the abovementioned initial conditions. From here we have that 
\begin{equation}
R=-\frac{\alpha}{\beta}\log \left(\frac{\beta S - \mu}{\beta S_0 - \mu} \right) .
\end{equation}
The conservation of the Hamiltonian (the total population is constant under the dynamics) allows us to write
\begin{equation}
I=1-S-R = 1-S + \frac{\alpha}{\beta}\log \left(\frac{\beta S - \mu}{\beta S_0 - \mu} \right) \, .
\end{equation}
Therefore, the first equation from \eqref{eq:SIRSendemic} reads
\begin{equation}
\dot S = \left( S - \frac{\mu}{\beta} \right) \left(- \beta + \beta S - \alpha\log \left(\frac{\beta S - \mu}{\beta S_0 - \mu} \right) \right) ,
\end{equation}
and therefore
\begin{equation}
\int^{S}_{S_0} \frac{d \zeta}{\left(\zeta - \frac{\mu}{\beta} \right) \left(- \beta + \beta \zeta - \alpha\log \left(\frac{\beta \zeta - \mu}{\beta S_0 - \mu} \right) \right)} = t .
\end{equation}
Setting 
\begin{equation}
\Sigma (t) = \left( \int^{S}_{S_0} \frac{d \zeta}{\left(\zeta - \frac{\mu}{\beta} \right) \left(- \beta + \beta \zeta - \alpha\log \left(\frac{\beta \zeta - \mu}{\beta S_0 - \mu} \right) \right)} \right)^{-1} (t)
\end{equation}
we have that the solution of the SIRS endemic system \eqref{eq:SIRSendemic} is given by 
\begin{equation}
\begin{cases}
&S(t) = \Sigma (t) \\
&I(t) = 1-\Sigma (t) + \frac{\alpha}{\beta}\log \left(\frac{\beta \Sigma (t) - \mu}{\beta S_0 - \mu} \right) \\
&R(t) = -\frac{\alpha}{\beta}\log \left(\frac{\beta \Sigma (t) - \mu}{\beta S_0 - \mu} \right)
\end{cases} .
\end{equation}
\hfill$\diamondsuit$
\end{example}

\begin{example}[Analytical solution of the SIR model with vaccination rate $\propto I$]

The procedure to find an exact solution to the system from Example \ref{ex:SIRvaccination} a) is similar. Recall from Proposition \ref{pr:SIRvaccI_biham} that the Casimir reads 
\begin{equation}
\mathcal C = -\left( R+\frac{(\alpha+v)}{\beta}\log \left(\frac{\beta S + v}{\beta S_0 + v} \right) \right) ,
\end{equation}
so 
\begin{equation}
R= - \frac{(\alpha+v)}{\beta}\log \left(\frac{\beta S + v}{\beta S_0 + v} \right) .
\end{equation}
The very same integration procedure shows that the solution of the system from Example \ref{ex:SIRvaccination} a) reads
\begin{equation}
\begin{cases}
&S(t) = \Sigma (t) \\
&I(t) = 1-\Sigma (t) + \frac{(\alpha+v)}{\beta}\log \left(\frac{\beta \Sigma (t) + v}{\beta S_0 + v} \right)  \\
&R(t) = - \frac{(\alpha+v)}{\beta}\log \left(\frac{\beta \Sigma (t) + v}{\beta S_0 + v} \right)
\end{cases} ,
\end{equation}
where now $\Sigma (t)$ is defined by
\begin{equation}
\Sigma (t) = \left( \int^{S}_{S_0} \frac{d \zeta}{(\zeta + \frac{v}{\beta}) \left( -\beta+ \beta \zeta- (\alpha+v) \log \frac{\beta \zeta + v}{\beta S_0 + v} \right) } \right)^{-1} (t) .
\end{equation}
\hfill$\diamondsuit$
\end{example}

\begin{example}[Analytical solution of the SIR model with vaccination rate $\propto S$]

The system from Example \ref{ex:SIRvaccination} b) presents a further complication, and we can only obtain the analytical solution in terms of two inverse functions. From Proposition \ref{pr:SIRvaccS_biham} we have that the Casimir, with the appropriate initial conditions, reads
\begin{equation}
\mathcal C = R+\frac{\alpha}{\beta} \log \frac{S}{S_0} - \frac{v}{\beta} \log \frac{1-S-R}{I_0}. 
\end{equation}
If we denote the solution of the equation $\mathcal C = 0$ by $R(t)=\rho(S(t))$, we have that
\begin{equation}
\Sigma(t) = \left( \int_{S_0}^S \frac{d \zeta}{- \beta \zeta (1-\zeta- \rho(\zeta)) - v \zeta} \right)^{-1} (t),
\end{equation}
and the solution of the system from Example \ref{ex:SIRvaccination} b) is
\begin{equation}
\begin{cases}
&S(t) = \Sigma (t) \\
&I(t) = 1-\Sigma (t) - \rho(t)  \\
&R(t) = \rho(\Sigma (t))
\end{cases} .
\end{equation}
\hfill$\diamondsuit$
\end{example}

Note that in all these examples the limit $\mu \to 0$ or $v \to 0$ is well defined and leads to the analytic solution to the original SIR system given in \cite{Miller2012,Harko2014}. This is a general fact for the generalized SIR system \eqref{eq:generalizedSIR}, since it can be thought of as an integrable deformation of the original SIR model \eqref{eq:originalSIR}, and therefore it can be expressed as a Hamiltonian system for a Poisson structure which is a deformation of  \eqref{eq:Poisson_original_SIR}. In turn, the corresponding Casimir functions are always deformations of \eqref{eq:H_2_originalSIR} in terms of the parameters $\mu$ and $v$.

\section{Concluding remarks}

Compartmental models are widely used to model the dynamics of epidemics. In this work we have proved that any compartmental model with constant population is a Hamiltonian system in which the total population plays the role of the Hamiltonian function. This  general result implies that all the analytical and geometrical tools connected with Hamiltonian systems can be applied in order to tackle epidemiological dynamics. Moreover, we have also showed that a large number of models with non-constant population can be transformed into effective models with constant population, so all the previous results hold also for them.

We have also found that compartmental models can be coupled, by keeping a Hamiltonian structure, in order to define interacting systems among different populations whose individuals can be interchanged. Moreover, three models which are generalizations of the original Kermack and McKendrick model have been proven to be bi-Hamiltonian, and their interacting analogues are shown to give rise to large families of bi-Hamiltonian systems.

As a direct application of the Hamiltonian structures here presented, we have obtained the exact analytic solutions for these three bihamiltonian models by making use of Casimir function of each corresponding Poisson algebra. This method indeed leads to the solutions found in \cite{Harko2014} for certain limiting cases, which are obtained provided the appropriate parameters vanish. Such Poisson-algebraic approach to analytical solutions for epidemiological models is indeed worth to be explored for other systems here presented, and becomes specially useful when the dimension and the number of parameters of the model increase, since analytic solutions have the potential to greatly simplify the analysis of the dynamical role played by each parameter. We stress that this approach  strongly relies upon the characterization and explicit computation of the Casimir functions of the Poisson structure associated to each model, a problem which is currently under investigation.

Moreover, the Hamiltonian nature of compartmental systems opens the path to the use of symplectic integrators \cite{Yoshida1990,Sanz-Serna1992} as a natural and  efficient tool in order to face the integration problem for these systems. Therefore, the use of the latter as benchmarks in order to compare the accurateness and efficiency of ordinary integrators versus symplectic integrators is also worth to be investigated.

Finally, the Hamiltonian structure here presented allows us to apply the well-known machinery of integrable deformations of Hamiltonian systems in order to define new compartmental models as integrable deformations of the dynamical systems here studied, including coupled ones. In particular, deformations based on Poisson coalgebras \cite{BR1998systematic} provide a suitable arena in order to face this problem by following the same techniques that have been previously used, for instance, in order to get integrable deformations and coupled versions of Lotka-Volterra, Lorenz, R\"ossler and Euler top dynamical systems \cite{BBM2011lotkavolterra,BBM2016rosslerlorentz,BMR2017bihamiltonian}. Work on this line is in progress and will be presented elsewhere.


\section*{Appendix. Hamiltonian and bi-Hamiltonian systems}

Let $M$ be a smooth manifold and let us denote its algebra of smooth functions by $\mathcal{C}^\infty (M)$. A Poisson structure on $M$ is given by a bivector field $\pi \in \Gamma (\bigwedge^{2} TM)$ such that $[\pi,\pi]=0$, where $[\cdot,\cdot] : \Gamma (\bigwedge^{2} TM) \times \Gamma (\bigwedge^{2} TM) \to \Gamma (\bigwedge^{3} TM)$ is the Schouten-Nijenjuis bracket. A \emph{Poisson manifold} $(M,\pi)$ is a manifold endowed with a Poisson structure. The Poisson bivector defines a Lie algebra structure on $\mathcal{C}^\infty (M)$ by means of the Poisson bracket 
\begin{equation}
\{ \cdot,\cdot \} : \mathcal{C}^\infty (M) \times \mathcal{C}^\infty (M) \to \mathcal{C}^\infty (M) ,
\end{equation}
defined by $\{ f_1,f_2 \} = (\mathrm{d}f_1 \otimes \mathrm{d}f_2) (\pi)$ for any $f_1,f_2 \in \mathcal{C}^\infty (M)$. 

A Poisson structure on $M$ induces a vector bundle morphism $\tilde \pi^\sharp : T^* M \to T M$ from the cotangent bundle to the tangent bundle of $M$. The linear morphism $\tilde \pi^\sharp_m : T^*_m M \to T_m M$ is not in general an isomorphism, since the Poisson structure can be degenerate. We call the rank of the Poisson structure $\pi$ at $m \in M$ to the rank of the linear map $\tilde \pi^\sharp_m$, which is always even due to the skew-symmetry of $\pi$. This vector bundle morphism defines a morphism between the respective spaces of sections of such bundles $\Gamma (T^* M) = \Omega^1 (M)$ and $\Gamma(T M)=\mathfrak X(M)$, which we denote by $\pi^\sharp : \Omega^1 (M) \to \mathfrak X(M)$. This morphism induces a map from functions to vector fields, given by
\begin{equation}
\begin{split}
\pi^\sharp \circ \mathrm{d} : \mathcal{C}^\infty (M) &\to \mathfrak X(M) \\
f & \to \pi^\sharp( \mathrm{d} f) = X_f
\end{split}
\end{equation} 
for any $f \in \mathcal{C}^\infty (M)$. The vector field $X_f$ is called the \emph{Hamiltonian vector field} associated to $f$. We say that $C \in \mathcal{C}^\infty (M)$ is a \emph{Casimir function} if its exterior derivative belongs to the kernel of $\pi^\sharp$, i.e. $\pi^\sharp(dC)=0$. In terms of the Poisson bracket we have that $\{ C,f \} = 0$ for all $f \in \mathcal{C}^\infty (M)$. 

Given a Poisson manifold $(M,\pi)$ and a smooth function $\mathcal H : M \to \mathbb R$ called the Hamiltonian, the dynamical evolution of any any function is given by the \emph{Hamilton's equations}
\begin{equation}
\dot f = \langle \pi^\sharp (\mathrm{d} \mathcal H), f \rangle.
\end{equation}
A Poisson manifold together with a Hamiltonian function is called a \emph{Hamiltonian system}. In this way, it is clear that the evolution of a Hamiltonian system is given geometrically by the integral curves of the Hamiltonian vector field $X_f$. In terms of the Poisson bracket, Hamilton's equations read
\begin{equation}
\dot f = \{f, \mathcal H\} .
\end{equation}

Given two Poisson structures $\pi_1$ and $\pi_2$ on the same manifold $M$ we say that they are \emph{compatible} if $\pi = \pi_1+ \pi_2$ is a Poisson structure. In fact, if there are two compatible Poisson structures on $M$ we can find an infinite number of them, since 
\begin{equation}
\pi = (1-\lambda) \pi_1+ \lambda \pi_2 \,
\end{equation}
is a Poisson structure. This family of Poisson structures is called the \emph{Poisson pencil}.

\begin{definition}
Let $\pi_1$ and $\pi_2$ be two compatible Poisson structures on $M$. A Hamiltonian system is called a \emph{bi-Hamiltonian system} if there exist two Hamiltonian functions $\mathcal H_1$ and $\mathcal H_2$ such that their Hamiltonian vector fields coincide. 
\end{definition}

Essentially, bi-Hamiltonian systems are Hamiltonian systems whose (unique) dynamics can be endowed with two different Hamiltonian structures. We have that
\begin{equation}
X_{\mathcal H_1} = \pi^\sharp_1 (\mathrm{d} \mathcal H_1) = \pi^\sharp_2 (\mathrm{d} \mathcal H_2) = X_{\mathcal H_2}
\end{equation}
Therefore, Hamilton's equations read
\begin{equation}
\dot f = \langle \pi^\sharp_1 (\mathrm{d} \mathcal H_1), f \rangle = \langle \pi^\sharp_2 (\mathrm{d} \mathcal H_2) , f \rangle ,
\end{equation}
or equivalently, 
\begin{equation}
\dot f = \{f, \mathcal H_1\}_1 = \{f, \mathcal H_2\}_2.
\end{equation}

Among Hamiltonian systems, those that posses a sufficient number of conserved quantities are particularly interesting, since these quantities can be used to find the solutions of the system. 

\begin{definition}
Let $(M,\pi)$ be a Poisson manifold such that $\mathrm{rank} \, \pi = 2n \leq \mathrm{dim}\,M$ on a dense open subset $\mathcal D$ of $M$. A Hamiltonian system on $M$ is \emph{completely integrable} (in the Liouville sense) if there exist $n-1$ functions $f_1,\ldots,f_{n-1} \in \mathcal{C}^\infty (M)$ such that the set $\{\mathcal H,f_1,\ldots,f_{n-1} \}$ is functionally independent, i.e.
\begin{equation}
\mathrm{d} \mathcal H \wedge \mathrm{d} f_1 \wedge \ldots \wedge \mathrm{d} f_{n-1} \neq 0
\end{equation}
on $\mathcal D$, and Poisson commuting, i.e.
\begin{equation}
\{ \mathcal H, f_a \} = 0, \qquad \{ f_a, f_b \} = 0 
\end{equation}
for all $a,b \in \{1, \ldots, n-1 \}$. 

Furthermore, the Hamiltonian system is called \emph{maximally superintegrable} if, additionally, there exist $g_1,\ldots,g_{n-1} \in \mathcal{C}^\infty (M)$ such that
\begin{equation}
\mathrm{d} \mathcal H \wedge \mathrm{d} f_1 \wedge \ldots \wedge \mathrm{d} f_{n-1} \wedge \mathrm{d} g_1 \wedge \ldots \wedge \mathrm{d} g_{n-1} \neq 0 ,
\end{equation}
and 
\begin{equation}
\{ \mathcal H, g_a \} = 0 .
\end{equation}
\end{definition}

Functions that Poisson commute with the Hamiltonian are called additional first integrals. Therefore, a system is completely integrable if it admits $n-1$ functionally independent additional first integrals, while it is  maximally superintegrable if it admits $2n-2$. For completely integrable systems, Liouville-Arnold theorem guarantees that the solution can be found by quadratures if the canonical transformation to action-angle coordinates is known. Maximally superintegrable systems are particular cases of completely integrable systems for which the dynamics is strictly unidimensional.

For Hamiltonian systems defined on an arbitrary Poisson manifold $(M,\pi)$ the notions of complete and maximal superintegrability are intimately related to the Casimir structure of the Poisson manifold, which is a geometrical property of $(M,\pi)$ independent of the particular Hamiltonian system under consideration. In particular, any Casimir function is a first integral for any Hamiltonian system defined on $(M,\pi)$.

\section*{Acknowledgements}

This work has been partially supported by Agencia Estatal de Investigaci\'on (Spain) under grants MTM2016-79639-P (AEI/FEDER, UE) and PID2019-106802GB-I00 / AEI / 10.13039/501100011033, and by Junta de Castilla y Le\'on (Spain) under grants BU229P18 and BU091G19.


\small

\end{document}